\renewcommand{\meanval}[1]{\bE\left[#1\right]}
\newtheorem{lemma}{Lemma}[section]
\newtheorem{theorem}[lemma]{Theorem}
\newtheorem{corollary}[lemma]{Corollary}
\newtheorem{remark}[lemma]{Remark}
\numberwithin{equation}{section}
\author{
	G. Mazzuca
	\footnote{
		International School for Advanced Studies (SISSA), Via Bonomea 265,  34136 Trieste, Italy \newline
		\textit{Email: } \texttt{guido.mazzuca@sissa.it} 
}}
\title{On the mean Density of States of some matrices related to the beta ensembles and an application to the Toda lattice}
\date{\today}
\begin{document}
	\maketitle
	\begin{abstract}
		In this manuscript we  study   tridiagonal random matrix models  related  to   the classical $\beta$-ensembles (Gaussian, Laguerre, Jacobi)  in the high temperature regime, i.e. when the   size $N$ of the matrix tends to infinity
		with the constraint that $\beta N=2\alpha$   constant, $\alpha > 0$. 
		We call these ensembles the Gaussian, Laguerre and Jacobi $\alpha$-ensembles and we prove the convergence of their empirical spectral distributions to their mean densities of states and we compute them explicitly. As an application we explicitly compute the mean density of states of the  Lax matrix of the  Toda lattice with periodic boundary conditions with respect to the Gibbs ensemble.
	\end{abstract}

	\section{Introduction}
	In this manuscript  we consider  some tridiagonal random matrix  models  related to the classical $\beta$-ensembles \cite{Dumitriu2002,Killip2004,Edelman2008}. More specifically we  study  the mean density of states of the  random  matrices in Table \ref{tab:alpha_ensemble} where the quantity $\cN(0,\sigma^2)$ is  the real Gaussian  random variable with density $\frac{e^{-\frac{x^2}{2\sigma^2}}}{\sqrt{2\pi\sigma^2}}$ {supported on all $\R$}, the  quantity  
	$\chi_{2\alpha}$ is the chi-distribution  with density $\frac{x^{2\alpha-1}e^{-\frac{x^2}{2}}}{2^{\alpha-1}\Gamma(\alpha)}$ {supported on $\R^+$, here $\Gamma(\alpha)$ is  the gamma function,} and $\text{Beta}(a,b)$ is  the Beta random variable  with density $\frac{\Gamma(a+b)x^{a-1}(1-x)^{b-1}}{\Gamma(a)\Gamma(b)}$ {supported on $(0,1)$}.

	\begin{table}[h]
		
		\centering
		\begin{tabular}{|l|l|}
			\hline
			Gaussian $\alpha$ Ensemble   & $H_\alpha \sim \frac{1}{\sqrt{2}} \begin{pmatrix}
			a_1 & b_1 \\
			b_1 & a_2& b_2 \\
			& \ddots & \ddots & \ddots\\
			&& b_{N-1} & a_N
			\end{pmatrix}\, ,$\\
			& $H_\alpha \in \text{Mat}(N\times N)\,,$\\
			& $b_n \sim \chi_{2\alpha} \quad n=1,\ldots, N-1\,,$\\
			& $a_n \sim \cN(0,2)\quad n=1,\ldots,N\,,$
			\\ \hline
			Laguerre $\alpha$ Ensemble & $L_{\alpha,\gamma} = B_{\alpha,\gamma} B_{\alpha,\gamma}^\intercal , \quad B_{\alpha,\gamma} =\frac{1}{\sqrt{2}} \begin{pmatrix}
			x_1 \\
			y_1 & x_2 \\
			& \ddots & \ddots  \\
			&& y_{N-1} & x_N
			\end{pmatrix},$ \\
			& $B_{\alpha,\gamma} \in \text{Mat}(N\times M),\, M \geq N,$\\
			& $x_n \sim \chi_{\frac{2\alpha}{\gamma}}\quad n = 1, \ldots, N,$\\
			& $y_n \sim \chi_{2\alpha}\quad n= 1, \ldots, N-1,$ \\ \hline
			Jacobi $\alpha$ Ensemble   & $J_\alpha = D_\alpha D_\alpha^\intercal , \quad  D_\alpha = \begin{pmatrix}
			s_1 \\
			t_1 & s_2 \\
			& \ddots & \ddots \\
			&& t_{N-1} & s_N
			\end{pmatrix},$ \\
			& $ D_\alpha \in \text{Mat}(N\times N),$\\
			&  $t_n = \sqrt{q_n(1-p_n)},\, s_n = \sqrt{p_n(1-q_{n-1})},$\\
			&  $q_n \sim  \text{Beta}\left(\alpha, \alpha + a+b +2\right) \quad (q_0 = 0),$ \\ 
			& $p_n \sim  \text{Beta}\left(\alpha + a + 1, \alpha +b +1\right).$
			\\ \hline
		\end{tabular}
		\caption{The Gaussian, Laguerre and Jacobi $\alpha$-ensembles.}
		\label{tab:alpha_ensemble}
	\end{table}
	
	Let us explain some terminology first and then state our result.
	
	 A random Jacobi matrix is  a symmetric tridiagonal  $N\times N$ matrix of the form
	\begin{equation}
	\label{eq:jacobi}
	T_N := \begin{pmatrix}
	a_1 & b_1 \\
	b_1 & a_2 & b_2\\
	& \ddots & \ddots & \ddots \\
	&& \ddots & \ddots & b_{N-1}\\
	&&&b_{N-1} & a_N
	\end{pmatrix}\, 
	\end{equation} 
	where $\{a_i\}_{i=1}^N$  are i.i.d. real random variables and $\{b_i\}_{i=1}^{N-1}$ are  i.i.d. positive random variables independent from the $a_i$.
	This matrix has the property of having $N$-distinct eigenvalues \cite{Deift}.
	The empirical spectral distribution of $T_N$  is the random probability distribution on $\mathbb{R}$ defined as 
	\begin{equation}
	\label{empirical_m}
	\di \nu^{(N)}_{T} := \frac{1}{N}\sum_{j=1}^N\delta_{\lambda^{(N)}_j}\,,
	\end{equation}
	where { $\lambda_1^{(N)}>\,\dots\,>\lambda_N^{(N)}$} are the eigenvalues of $T_N$ and $\delta_{(\cdot)}$ is the delta function.  
	{The mean {\em Density of States }  $\wo{\di \nu_{T}}$  is the non random probability distribution, provided it exists, defined as }
	\begin{equation}
	\label{eq:def_bar}
	\la \wo{\di \nu_{T}}, f \ra  := \lim_{N\to \infty}\meanval{\la {\di \nu^{(N)}_{T}}, f \ra}\, ,
	\end{equation} 
	{for all continuous and bounded functions $f$, here  $\la \di\sigma, f \ra := \int_\R f \di \sigma $  and $ \meanval{\cdot}$ stands for the expectation with respect to the given probability distribution.}
	
	In this manuscript   we  identify the mean density of states of the  Gaussian, Laguerre and Jacobi $\alpha$-ensembles  introduced in Table~\ref{tab:alpha_ensemble}.
	
	\begin{theorem}
		\label{THM:MAIN}
		Consider the matrices $ H_\alpha, L_{\alpha,\gamma}$, and  $J_\alpha$  in Table~\ref{tab:alpha_ensemble} with $\alpha\geq 0$, $ \gamma\in(0,1)$, $a+\alpha > 0$, $b+\alpha > 0$ and $a\not\in \N$. Then {their empirical spectral distributions $\di\nu_H^{(N)},\di\nu_L^{(N)},$ and $\di\nu_J^{(N)}$
		 converge almost surely,  in the large $N$ limit,  to their corresponding mean density of states, whose formula are given explicitly by}:
		
		\begin{align}
		\label{eq:mean_gaussian}
		&\wo{\di \nu_H}(x) = \partial_{\alpha}(\alpha \mu_{\alpha}(x)) \di x \, ,\\
		\label{eq:mean_laguerre}
		&\wo{\di \nu_L}(x) = \partial_\alpha\left(\alpha  \mu_{\alpha,\gamma}(x)\right) \di x\, ,\quad x \geq 0 ,\\
		\label{eq:mean_jacobi}
		&\wo{\di \nu_J} (x)= \partial_{\alpha}\left(\alpha  \mu_{\alpha,a,b}(x)\right) \di x\, , \quad 0 \leq x \leq 1 \, .
		\end{align}	
		Here $\partial_{\alpha}$ is the derivative with respect to $\alpha$ and 
		\begin{align}
		\label{eq:spectralGaussian}
		&\mu_\alpha(x):=  \frac{e^{-\frac{x^2}{2}}}{\sqrt{2\pi}} \abs{\wh f_{\alpha}(x)}^{-2}\, , \quad 	 
		\wh f_{\alpha}(x): =\sqrt{\frac{\alpha}{\Gamma(\alpha)}}\int_{0}^{\infty} t^{\alpha - 1}e^{-\frac{t^2}{2}} e^{i x t}\di t
		\, ,\\
		\label{eq:spectralLaguerre}
		& \mu_{\alpha,\gamma}(x) := \frac{1}{\Gamma(\alpha+1)\Gamma\left(1+ \frac{\alpha}{\gamma} + \alpha\right)} \frac{x^\frac{\alpha}{\gamma}e^{-x}}{\Big\lvert \psi\left(\alpha,-\frac{\alpha}{\gamma};xe^{-i\pi}\right)\Big\rvert^2} \quad x \geq 0\, ,
		\end{align}
		with  $\Gamma(z)$ the gamma-function and  $\psi(v,w;z)$ is  the Tricomi's confluent hypergeometric function, for the definition see Appendix \ref{appendixA},
		and 	
		\begin{equation}
		\label{eq:explicit_jacobi_TT}
		\mu_{\alpha,a,b}(x) := 	\frac{\Gamma(\alpha +1)\Gamma(\alpha + a + b +2 )}{\Gamma(\alpha + a+1)\Gamma(\alpha + b +1)}\frac{x^a(1-x)^b}{\Big \lvert U(x)+ e^{i \pi b }V(x)\Big \rvert^2}\,\qquad 0 \leq x \leq 1 ,
		\end{equation}
		where
		
		\begin{align}
		&U(x) := \frac{\Gamma(\alpha + 1)\Gamma(a+1)}{\Gamma(1+\alpha+a)}{_2F_1}(\alpha,-\alpha-a-b-1,-a;x) \,,\\
		&V(x) := \frac{-\pi \alpha\Gamma(\alpha + a+ b +2)}{\sin(\pi a)\Gamma(1+\alpha +b)\Gamma(a+2)}(1-x)^{b+1}x^{a+1} {_2F_1}(1-\alpha,\alpha+a+b+2,2+a;x)\, ,
		\end{align}
		here ${_2F_1}(a,b,c;z)$ is the Hypergeometric function:
		
		\begin{equation}
		{_2F_1} (a,b,c;z) := \sum_{n=0}^{\infty} \frac{(a)_n(b)_n}{(c)_n}\frac{z^n}{n!}\, , \quad  (a)_n:=a(a+1)\cdots(a+n-1)\, .
		\end{equation}
		Moreover, for any non trivial polynomial $P(x)$ the following limits hold:
		\begin{align}	
		\label{eq:fluct_gaussian}	
		&\sqrt{N}\left(\la \di \nu_H^{(N)}, P(x) \ra - \la \wo{\di \nu_H},P(x)\ra\right) \stackrel{d}{\to} \cN(0,\sigma_P^2) \, \quad \text{as } \, N\to \infty ,\\
		\label{eq:fluct_laguerre}
		&\sqrt{N}\left(\la \di \nu_L^{(N)}, P(x) \ra - \la \wo{\di \nu_L},P(x)\ra\right) \stackrel{d}{\to} \cN(0,\wo \sigma_P^2) \,\quad \text{as } \, N\to \infty ,\\
		\label{eq:fluct_jacobi}
		&\sqrt{N}\left(\la \di \nu_J^{(N)}, P(x) \ra - \la \wo{\di \nu_J},P(x)\ra\right) \stackrel{d}{\to} \cN(0,\wt \sigma_P^2)\, \quad \text{as } \, N\to \infty ,
		\end{align}
				for some constants $\sigma^2_P,\wo \sigma^2_P,\wt \sigma^2_P \geq 0$, here $\stackrel{d}{\to}$ is the convergence in distribution.

	\end{theorem}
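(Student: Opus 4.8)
The plan is to treat the three ensembles in parallel, since each is a random Jacobi matrix (or a positive-definite combination built from one) whose entries are independent with slowly varying laws. My strategy rests on the classical connection between tridiagonal matrices and orthogonal polynomials: for a Jacobi matrix $T_N$, the empirical spectral distribution $\di\nu_T^{(N)}$ is the measure of orthogonality associated with the recurrence coefficients $\{a_i,b_i\}$, and moments of $\di\nu_T^{(N)}$ are polynomial functions of the entries. I would first express $\la\di\nu^{(N)}_T, x^k\ra = \frac{1}{N}\operatorname{tr}(T_N^k)$ as a sum of weighted closed walks on the line graph, so that each moment becomes an empirical average of a local, bounded functional of the i.i.d. entries. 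Because the entry distributions ($\chi_{2\alpha}$, $\chi_{2\alpha/\gamma}$, the Beta-built $s_n,t_n$) have all moments finite and are independent along the diagonal, the trace moments are sums of $O(N)$ weakly dependent terms.

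For the almost-sure convergence of the empirical spectral distribution, I would proceed by the method of moments. First I would show that $\meanval{\la\di\nu^{(N)}_T, x^k\ra}$ converges as $N\to\infty$ to the $k$-th moment $m_k$ of the claimed limit $\wo{\di\nu_T}$; this reduces to evaluating $\lim_N \frac{1}{N}\sum_n \meanval{(\text{walk weight at site }n)}$, which by translation structure is a single expectation over the entry laws. I would then check that the candidate densities in \eqref{eq:spectralGaussian}--\eqref{eq:explicit_jacobi_TT} reproduce exactly these moments, presumably by matching them against the moments generated by the orthogonal-polynomial/Tricomi representation; the appearance of $\partial_\alpha(\alpha\,\mu_\alpha)$ should come from a differentiation-in-$\alpha$ identity relating the mean eigenvalue density to the spectral measure $\mu_\alpha$ at a fixed boundary site, a point I expect to verify through the resolvent/continued-fraction expansion of $\mu_\alpha$. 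To upgrade convergence in expectation to almost-sure convergence, I would establish concentration: a variance bound $\variance{\la\di\nu^{(N)}_T, x^k\ra}=O(1/N)$, obtained from the local/independent structure of the walk sum (covariances vanish unless walks overlap, giving $O(N)$ nonzero terms out of $O(N^2)$), followed by a Borel--Cantelli argument along a subsequence together with monotonicity, or directly via a McDiarmid-type bounded-difference inequality exploiting that changing one entry perturbs $O(1)$ walks.

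For the Gaussian fluctuations \eqref{eq:fluct_gaussian}--\eqref{eq:fluct_jacobi}, the key observation is that for any fixed polynomial $P$, the linear statistic $\la\di\nu^{(N)}_T, P\ra = \frac{1}{N}\sum_k c_k \operatorname{tr}(T_N^k)$ is, up to boundary corrections of size $O(1)$, a sum $\sum_{n} g_n$ of bounded functionals $g_n$ of the entries, where $g_n$ depends only on entries within a fixed band around site $n$ (the bandwidth growing with $\deg P$ but independent of $N$). This is exactly the setting of a central limit theorem for $m$-dependent (finite-range dependent) stationary sequences. The plan is therefore to center and rescale by $\sqrt{N}$ and apply a classical $m$-dependent CLT (e.g.\ via the Cramér--Wold device and a big-block/small-block argument, or directly Hoeffding--Robbins), yielding the Gaussian limit with variance $\sigma_P^2 = \sum_{|j|\le m}\operatorname{Cov}(g_0,g_j)\ge 0$; the nonnegativity is automatic and the variance is finite because $P$ has finitely many terms and the entries have all moments.

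The main obstacle I anticipate is not the convergence machinery itself but the explicit identification of the limiting moments with the stated densities, i.e.\ proving that $\lim_N \meanval{\frac{1}{N}\operatorname{tr}(T_N^k)}$ equals $\int x^k\,\partial_\alpha(\alpha\,\mu_\alpha)\,\di x$ with $\mu_\alpha$ given through $\wh f_\alpha$ and the Tricomi function. This requires extracting the mean density of states from the spectral measure of the semi-infinite random Jacobi operator and performing the $\partial_\alpha$-differentiation, which I expect to hinge on an exact solvability feature special to these $\beta N=2\alpha$ ensembles—most naturally a characterization of the limiting recurrence via a fixed-point equation for the resolvent whose solution is a confluent hypergeometric (Gaussian/Laguerre) or Gauss hypergeometric (Jacobi) function. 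Handling the $a\notin\N$ restriction and the boundary behaviour of $\mu_{\alpha,a,b}$ near $x=0,1$ will be the delicate analytic points, but they do not affect the convergence and fluctuation arguments, which are robust to the precise form of the limit.
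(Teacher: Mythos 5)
Your convergence and fluctuation machinery is essentially sound, and in fact parallels what the paper does: the trace--walk expansion you describe is exactly the super-Motzkin expansion of $\mathit{Tr}(T_N^l)$ (Theorem~\ref{thm:structure_moments}), and the variance bound plus Borel--Cantelli and the $m$-dependent CLT for linear statistics are precisely the content the paper imports from Nakano's theorem (Theorem~\ref{thm:nakano}), combined with the moment-determinacy argument (Lemma~\ref{lem:bay} and Lemma~\ref{lem:duy}). So the ``robust'' part of your plan would go through.

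The genuine gap is the step you yourself flag as the main obstacle and then leave as a hope: the identification of $\lim_N \frac{1}{N}\meanval{\mathit{Tr}(T_N^l)}$ with the moments of $\partial_\alpha(\alpha\mu_\alpha)$ (and its Laguerre/Jacobi analogues). Your proposed route --- a resolvent/continued-fraction fixed-point equation solved by hypergeometric functions --- is never carried out, and it is also pointed in a dangerous direction, because your opening premise is incorrect: the empirical spectral distribution of a Jacobi matrix is \emph{not} its orthogonality measure. The orthogonality measure is the spectral measure \eqref{spectral_m}, with weights $q_j^2=|\langle v_j,e_1\rangle|^2$; for the $\beta$-ensembles the two agree in expectation (the weights are independent of the eigenvalues), but for the $\alpha$-ensembles they do not --- the mean spectral measure converges to $\mu_\alpha$ itself, while the mean density of states is $\partial_\alpha(\alpha\mu_\alpha)$. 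Matching moments ``against the orthogonal-polynomial/Tricomi representation,'' as you propose, would therefore produce $\mu_\alpha$, i.e.\ the wrong limit. The paper closes this gap without re-deriving any density: it compares the $\alpha$- and $\beta$-ensembles entrywise. Since the off-diagonal entries of $H_\beta$ are $\chi_{\beta(N-n)}=\chi_{2\alpha(1-n/N)}$ (parameter varying linearly along the diagonal) while those of $H_\alpha$ are $\chi_{2\alpha}$ (constant parameter), the \emph{same} local moment polynomial $w_l$ governs both: $\meanval{h_j^{(l)}}_{H_\alpha}=w_l(\alpha)$ and $\meanval{h_j^{(l)}}_{H_\beta}=w_l\bigl(\alpha(1-\tfrac{j}{N})\bigr)+O(N^{-1})$ (Lemma~\ref{lem:relation}). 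Averaging over $j$ gives $u^{(l)}_\alpha=\int_0^1 w_l(\alpha x)\,\di x$ for the $\beta$-ensemble and $v^{(l)}_\alpha=w_l(\alpha)$ for the $\alpha$-ensemble, whence the elementary calculus identity $v^{(l)}_\alpha=\partial_\alpha(\alpha u^{(l)}_\alpha)$ (Corollary~\ref{cor:momenti_alpha}). The known high-temperature results for the $\beta$-ensembles (Theorem~\ref{thm:ht_regime}) identify the $u^{(l)}_\alpha$ as the moments of $\mu_\alpha$, $\mu_{\alpha,\gamma}$, $\mu_{\alpha,a,b}$, and moment determinacy then forces the mean density of states of the $\alpha$-ensembles to be $\partial_\alpha(\alpha\mu_\alpha)$, etc. Without this comparison (or an honest execution of your solvability program, which would amount to redoing the cited $\beta$-ensemble literature from scratch), the explicit formulas \eqref{eq:mean_gaussian}--\eqref{eq:mean_jacobi} --- the actual content of the theorem --- remain unproven.
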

	In figures \ref{fig:gaussian}--\ref{fig:jacobi} we plot the empirical spectral distribution of the $\alpha$-ensembles for different values of the parameters.

		The measures {with density }$\mu_\alpha$,   $\mu_{\alpha,\gamma}$ and $\mu_{\alpha,a,b}$ have already appeared in the literature as the orthogonality measures of the associated Hermite, Laguerre and Jacobi polynomias ( see Appendix~\ref{appendixA}). Such measures  have  also  appeared  
		in the study of  the  classical $\beta$-ensembles  \cite{Dumitriu2002} (see Table \ref{tab:beta_ensemble})  in 
		the {\em high temperature regime}, namely in the limit when  $N\to \infty$, with $\beta N \to 2\alpha$, $\alpha >0$, \cite{BP, Duy2015, Trinh2019,Trinh2020,Allez2012,Allez2013,Hardy2020}.  
		In order to summarize the results of those papers we recall that 
		for the Jacobi matrix $T_N$ in \eqref{eq:jacobi} the {\it  spectral measure}   $\di \mu^{(N)}_{T}$ is   the probability measure supported on its eigenvalues 
		$\lambda^{(N)}_1,\ldots,\lambda^{(N)}_N$ with weights $q_1^2,\ldots,q_N^2$ where  $q_j= |\langle v_j^{(N)},e_1\rangle|$ and $v^{(N)}_1,\ldots, v^{(N)}_N$  are the orthonormal 
		eigenvectors:
		\begin{equation}
		\label{spectral_m}
		\di \mu^{(N)}_{T} : =\sum_{j=1}^Nq^2_j\delta_{\lambda^{(N)}_j}.
		\end{equation}
		As the eigenvectors form an orthonormal basis, and  $||e_1||=1$ we get that $\sum_{j=1}^Nq_j^2=1$.
		Moreover the set of finite Jacobi matrix of size $N$ is in one to one correspondence with the set of probability measure supported on $N$ real points \cite{Deift}.
		
		For the $\beta$-ensembles the quantities  $\{q_i\}_{i=1}^N$ are independent from the eigenvalues and are distributed as $(\chi_\beta, \ldots, \chi_\beta)$ normalized to unit length \cite{Dumitriu2002,Killip2004,Edelman2008}. It follows that $E[q_j^2]=\frac{1}{N}$. Consequently the mean of the empirical measure \eqref{empirical_m} coincides with the mean of the spectral measure \eqref{spectral_m}, namely 
		$$\di \bar{\nu}^{(N)}_{H_\beta}  =\di\bar{ \mu}^{(N)}_{H_\beta},\;\;\di \bar{\nu}^{(N)}_{L_\beta}  =\di\bar{ \mu}^{(N)}_{L_\beta},\;\;\;\di \bar{\nu}^{(N)}_{J_\beta}  =\di\bar{ \mu}^{(N)}_{J_\beta},$$
		where $H_\beta$, $L_\beta$ and $J_\beta$ refer to the Hermite, Laguerre and Jacobi $\beta$-ensembles.
		It is shown in \cite{Duy2015}  ( see also \cite{BP,Allez2012})  that the measures $\di \bar{\nu}^{(N)}_{H_\beta}=\di\bar{ \mu}^{(N)}_{H_\beta}$  converge  weakly, in the limit   $N\to \infty$, with $\beta N= 2\alpha$,  to the non random probability measure  with density $\mu_\alpha$ defined in \eqref{eq:spectralGaussian}.
		It is shown in \cite{Trinh2019,Trinh2020,Allez2013}   that the measures $\di \bar{\nu}^{(N)}_{L_\beta}  =\di\bar{ \mu}^{(N)}_{L_\beta}$ and $\di \bar{\nu}^{(N)}_{J_\beta}  =\di\bar{ \mu}^{(N)}_{J_\beta}$, under some mild assumptions on the parameters,   converge weakly in the limit  $N\to \infty$, with $\beta N \to 2\alpha$ and $ N/M \to \gamma\in (0,1)$ to the non random probability measures with  density  $  \mu_{\alpha,\gamma}$   and $ \mu_{\alpha,a,b}$  defined in  \eqref{eq:spectralLaguerre}   and \eqref{eq:explicit_jacobi_TT}   respectively.
		In \cite{Duy2015,Trinh2019,Trinh2020} it is showed that these measures coincide with the mean spectral measures  of the random matrices $H_\alpha$, $L_\alpha$ and $J_\alpha$, see Table \ref{tab:alpha_ensemble}.
	
The problem of  convergence of the empirical   spectral distribution of  the Gaussian, Laguerre and Jacobi $\alpha$-ensembles has remained unsolved.
	The present manuscript addresses such problem in Theorem~\ref{THM:MAIN} by determining the mean Density of States of such random matrices   and their fluctuation.
	Our strategy to prove the result is the application of the moment method and an astute counting  of the super-Motzkin paths \cite{Joris2015} to calculate the moments of the  the Gaussian, Laguerre and Jacobi $\alpha$ and $\beta$-ensembles.

	For completeness we  mention  also the result  in \cite{Popescu2009}  where a different generalization of the Gaussian $\beta$ ensemble is studied. Indeed   in \cite{Popescu2009} the author examined the mean spectral measure of a random Jacobi matrix $T_N$ such that there exists a sequence of real number $\{m_k\}_{k\geq 0}$ and $m_0 = 1$ such that $\meanval{(b_1/N^\sigma)^k} \to m_k$ as $N\to \infty$ for all fixed  $k\in \N$, which is a generalization of the classical case where $b_1 \sim \chi_{\beta(N-1)}/\sqrt{2}$ and $\sigma = 1/2$.
	\begin{table}[ht]
		\centering
		\begin{tabular}{|l|l|}
			\hline
			Gaussian $\beta$ Enseble  & $H_\beta \sim \frac{1}{\sqrt{2}} \begin{pmatrix}
			a_1 & b_1 \\
			b_1 & a_2& b_2 \\
			& \ddots & \ddots & \ddots\\
			&& \ddots & \ddots & b_{N-1}\\
			&&& b_{N-1} & a_N
			\end{pmatrix}$\\
			& $H_\beta \in \text{Mat}(N\times N),$\\
			& $b_n \sim \chi_{\beta(N-n)} \quad n=1,\ldots, N-1,$\\
			& $a_n \sim \cN(0,2)\quad n=1,\ldots,N,$
			\\ \hline
			Laguerre $\beta$ Enseble   & $L_{\beta,\gamma} = B_{\beta,\gamma} B_{\beta,\gamma}^\intercal , \quad B_{\beta,\gamma} =\frac{1}{\sqrt{2}} \begin{pmatrix}
			x_1 \\
			y_1 & x_2 \\
			& \ddots & \ddots  \\
			&& y_{N-1} & x_N
			\end{pmatrix},$ \\
			& $B_{\beta,\gamma} \in \text{Mat}(N\times M),\, M \geq N,$\\
			& $x_n \sim \chi_{\beta(M-n+1)}\quad n = 1, \ldots, N,$\\
			& $y_n \sim \chi_{\beta (N-n)}\quad n = 1, \ldots, N-1,$ \\ \hline
			Jacobi $\beta$ Enseble   & $J_\beta = D_\beta D_\beta^\intercal , \quad  D_\beta = \begin{pmatrix}
			s_1 \\
			t_1 & s_2 \\
			& \ddots & \ddots \\
			&& t_{N-1} & s_N
			\end{pmatrix},$ \\
			& $ D_\beta \in \text{Mat}(N\times N),$\\
			&  $t_n = \sqrt{q_n(1-p_n)},\, s_n = \sqrt{p_n(1-q_{n-1})},$\\
			&  $q_n \sim  \text{Beta}\left(\frac{\beta(N-n)}{2}, \frac{\beta(N-n)}{2} + a+b +2\right) \quad (q_0 = 0),$ \\ 
			& $p_n \sim  \text{Beta}\left(\frac{\beta(N-n)}{2} + a + 1, \frac{\beta(N-n)}{2} +b +1\right).$
			\\ \hline
		\end{tabular}
		\caption{The Gaussian, Laguerre and Jacobi $\beta$-ensembles}
		\label{tab:beta_ensemble}
	\end{table}

	Finally  we  relate the Gibbs ensemble of the classical Toda chain to  the Gaussian $\alpha$-ensemble. In particular we  obtain,  as a corollary of Theorem \ref{THM:MAIN}, the mean density of states of the Toda  Lax matrix  with periodic boundary conditions  
	when the matrix entries are distributed accordingly to the Gibbs ensemble { and when the number of particles goes to infinity}.
	This result is instrumental to study the Toda lattice in the thermonodynamic limit.
	We remark that the mean density of states of the Toda Lax matrix  has already  appear in the physics literature  \cite{Spohn2019}. Here we present an alternative proof of this result.

	\section{Preliminary results}
	
	In this section we summarize some known results and techniques that we will use along the proof { of the main theorem.}
	
	The moments of a measure $\di \sigma$, when they exist, are defined as:
	
	\begin{equation}
	\label{eq:def_mom}
	u^{(l)} := \la x^l, \di \sigma \ra \quad l\in\N\, .
	\end{equation} 
	Under some mild  assumptions, they totally define the measure itself, indeed the following Lemma, whose proof can be found in \cite[Lemma B.2]{Bai2010}, holds:
	\begin{lemma}(cf . \cite[Lemma B.2]{Bai2010})
		\label{lem:bay}
		Let $\{u^{(l)}\}_{l\geq 0}$ be the sequence of moments of a measure $\di \sigma$. If 
		\begin{equation}
		\label{eq:divergence_moments}
		\lim_{l\to \infty}\inf\frac{(u^{(2l)})^\frac{1}{2l}}{l} < \infty\, ,
		\end{equation}
		then $\di \sigma$ is uniquely determined by the moment sequence $\{u^{(l)}\}_{l\geq 0}$.
	\end{lemma}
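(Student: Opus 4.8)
The plan is to show that the hypothesis \eqref{eq:divergence_moments} is nothing but a reformulation of the classical \emph{Carleman condition}
\begin{equation}
\sum_{l\geq 1}\big(u^{(2l)}\big)^{-\frac{1}{2l}} = \infty\,,
\end{equation}
and then to invoke the Carleman--Hamburger determinacy theorem for the moment problem. Throughout we may assume $\di \sigma$ is a probability measure: normalising by the (necessarily finite) total mass rescales every moment by the same constant, which alters neither \eqref{eq:divergence_moments} nor the Carleman sum nor the determinacy conclusion. Set $a_l := \big(u^{(2l)}\big)^{1/(2l)}$, so that $a_l = \|x\|_{L^{2l}(\di \sigma)}$, using $u^{(2l)} = \int x^{2l}\di \sigma = \int |x|^{2l}\di \sigma$.

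First I would record the monotonicity of $a_l$. For a probability measure the $L^p$-norms are nondecreasing in $p$: applying Jensen's inequality to the convex map $t\mapsto t^{l'/l}$ (with $l' > l$) and the nonnegative integrand $|x|^{2l}$ gives $\int |x|^{2l'}\di \sigma \geq \big(\int|x|^{2l}\di \sigma\big)^{l'/l}$, i.e. $a_{l'}\geq a_l$. Hence $l\mapsto a_l$ is nondecreasing and $l\mapsto 1/a_l$ is nonincreasing.

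Next comes the reduction, which I would argue by contraposition. Suppose the Carleman sum converges, $\sum_{l\geq 1} 1/a_l < \infty$. Since $1/a_l$ is nonincreasing and summable, the block estimate $\sum_{j=l+1}^{2l} 1/a_j \geq l/a_{2l}$ (each of the $l$ terms is at least $1/a_{2l}$) has a left-hand side tending to $0$, being a tail of a convergent series; hence $l/a_{2l}\to 0$, and therefore $n/a_n\to 0$, that is $a_n/n\to\infty$. This contradicts $\liminf_{l\to\infty} a_l/l < \infty$. Consequently \eqref{eq:divergence_moments} implies the Carleman condition.

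Finally I would deduce determinacy: if two measures share the sequence $\{u^{(l)}\}_{l\geq 0}$, the Carleman condition forces them to coincide, through the quasi-analyticity (Denjoy--Carleman) mechanism underlying Hamburger's theorem, the difference of two such measures being pinned to a quasi-analytic class on which a vanishing moment sequence forces vanishing. I expect the genuine obstacle to lie precisely here: the hypothesis is a \emph{liminf} rather than a \emph{limsup} bound, so the naive route---deducing $\int e^{\rho|x|}\di \sigma<\infty$ for some $\rho>0$ and hence strip-analyticity of the Fourier transform---is unavailable, as $a_l/l$ need only be small along a sparse subsequence. This is exactly why the correct target of the reduction is Carleman's summation condition, which (thanks to the monotonicity of $a_l$) is insensitive to sparse subsequences, rather than strip-analyticity, and why the finer quasi-analytic machinery, instead of ordinary analytic continuation, is what closes the argument.
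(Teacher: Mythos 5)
Your proposal is correct, and it takes a different route from the one the paper relies on. The paper gives no proof of its own: it defers entirely to \cite[Lemma B.2]{Bai2010}, where the argument runs directly through characteristic functions --- if two measures share the moments, the Taylor remainder bound $\lvert \phi(t) - \sum_{j<2k} (it)^j u^{(j)}/j! \rvert \leq u^{(2k)}\lvert t\rvert^{2k}/(2k)!$ is applied along the sparse subsequence on which $(u^{(2k)})^{1/(2k)} \leq Ck$, Stirling's formula makes the remainder vanish on an interval of fixed length around any expansion point, and equality of the two characteristic functions is then propagated step by step over all of $\R$. Your reduction to Carleman's condition is a genuinely different mechanism, and it is sound: the monotonicity $a_l = \lVert x \rVert_{L^{2l}(\di\sigma)} \leq a_{l'}$ for $l \leq l'$ is exactly what neutralises the sparseness of the subsequence, and the block estimate $\sum_{j=l+1}^{2l} 1/a_j \geq l/a_{2l}$, combined with the vanishing of tails of a convergent series, correctly yields $a_n/n \to \infty$ from convergence of the Carleman sum, so contraposition gives the implication; your normalisation to a probability measure is harmless since $(u^{(0)})^{1/(2l)} \to 1$; and your closing diagnosis --- that the liminf hypothesis forbids the strip-analyticity shortcut, so quasi-analyticity is unavoidable --- matches why Bai--Silverstein argue along the subsequence rather than via exponential moments. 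The trade-off: your route is modular and short, outsourcing the quasi-analytic core to Carleman's classical theorem, whereas the cited proof is self-contained and elementary, needing nothing beyond Taylor's theorem and Stirling. Two minor points worth a sentence each in a final write-up: dispose of the degenerate case where $u^{(2l)} = 0$ for some $l \geq 1$ (then $\di\sigma$ is a point mass at the origin and determinacy is trivial, while $1/a_l$ would otherwise be undefined), and state explicitly that Carleman's theorem is being used in its Hamburger (whole-line) form, which is the version matching the two-sided supports occurring in the paper.
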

	
	This implies that if two measures have the same moment sequence and \eqref{eq:divergence_moments} holds then the two measure are the same. We will exploit this  property,  indeed we will show that the moments of the random matrices $H_\alpha,L_\alpha$ and $J_\alpha$ coincide, in the large $N$ limit, with the moments of  the measure $\wo{\di \nu_H}(x), \wo{\di \nu_L}(x)$ and $\wo{\di \nu_J}(x)$ in \eqref{eq:mean_gaussian}--\eqref{eq:mean_jacobi} and we will prove that \eqref{eq:divergence_moments} holds for all of them. This technique undergoes the name of moment method. 
	
	In order to apply this idea, we need to compute explicitly the moments of the mean density of states for the $\alpha$ and $\beta$-ensembles.  We will use the following identity for the moments of the mean density of states:
	
	\begin{equation}
	\label{eq:alternative_moment}
	\la \wo{\di \nu_{T}}, x^l \ra  = \lim_{N\to \infty}\frac{1}{N}\meanval{\mathit{Tr}(T^l_N)}\ \, ,
	\end{equation} 
	where
	\[
	\mathit{Tr}(T^l_N):=\sum_{j=1}^NT^l_N(j,j),
	\]
	and $T_N^l(j,i)$ is the entry $(j,i)$ of the matrix $T_N^l$ and the average is made according to the distribution of the matrix entries. From now on we will write $\meanval{f(\ba,\bb)}_T$ as the mean value of $f(\ba,\bb)$ made according to the distributions of the matrix $T$'s  {entries}, here $\ba$ is a vector of components $a_1,\, \ldots, \, a_N$.
	
	To conclude the computation of the moments, we need an explicit expression for the terms $T_N^l(j,j)$.  The following lemma proved in \cite{Grava2020} gives us their general expressions:
	
	\begin{theorem}(cf. \cite[Theorem 3.1]{Grava2020})
		\label{thm:structure_moments}
		For any {$1 \leq l  < N$}, consider the tridiagonal matrix $T_N$ \eqref{eq:jacobi},	{then} one has 
		\begin{equation}
		\label{eq:Jacobi_trace_periodic}
		\mathit{Tr}(T_N^l)= \sum_{j=1}^N h_{j}^{(l)} \, ,
		\end{equation}		
		where $ h_{j}^{(l)}:= T^l_N(j,j)$ is given explicitly for $ \lfloor{l/2}\rfloor < j <N- \lfloor{l/2}\rfloor$ by
		\begin{equation}\label{eq:general_super_motzkin}
		h_{j}^{(l)} (\bb,\ba)= \sum_{(\bn,\bk)\in \cA^{(l)}} \, \rho^{(l)}(\bn,\bk) \prod_{i = - \lfloor{l/2}\rfloor}^{ \lfloor{l/2}\rfloor-1} b_{j+i}^{2{n_i}} \prod_{i = - \lfloor{l/2}\rfloor+1 }^{ \lfloor{l/2}\rfloor -1} a_{j+i}^{k_i} \, .
		\end{equation}
		Here $\cA^{(m)}$ is the set  
		
		\begin{equation}
		\label{cAm}
		\begin{split}
		\cA^{(l)} := \Big\{(\bn,\bk) \in \N^{\mathbb{Z}}_0 \times \N^{\mathbb{Z}}_0 \ \colon \ \ \ 
		& \sum_{i= - \lfloor{l/2}\rfloor }^{ \lfloor{l/2}\rfloor-1} \left(2n_i + k_i\right) = l  , \\
		& \forall i \geq 0, \ \ \ n_i = 0 \Rightarrow n_{i+1} = k_{i+1} = 0,  \,  
		\\
		& \forall i < 0, \ \ \ n_{i+1} = 0 \Rightarrow n_{i}= k_i = 0  
		\Big\}.
		\end{split}
		\end{equation}
		The quantity  $\N_0=\N\cup\{0\}$
		and $\rho^{(l)}(\bn, \bk) \in \N $ is 
		given by 
		\begin{align}
		\label{rhom}
		\rho^{(l)}(\bn,\bk) := &\binom{n_{-1} + n_0 + k_0}{k_0}\binom{n_{-1} + n_0}{n_0}
		\prod_{i=- \lfloor{l/2}\rfloor \atop i \neq -1}^{  \lfloor{l/2}\rfloor -1}\binom{n_i + n_{i+1} +k_{i+1} -1}{k_{i+1}}\binom{n_i + n_{i+1} -1}{n_{i+1}} \, .
		\end{align}		
	\end{theorem}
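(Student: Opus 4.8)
The plan is to read $h_j^{(l)}=T_N^l(j,j)$ as a weighted count of closed walks and then to enumerate those walks according to how often they cross each edge and rest at each site. Since $T_N$ is tridiagonal, expanding the matrix product gives
\[
T_N^l(j,j)=\sum_{\pi} w(\pi),\qquad \pi=(j=v_0,v_1,\dots,v_l=j),\quad v_{t+1}-v_t\in\{-1,0,1\},
\]
where a step contributes the weight $a_{v_t}$ if $v_{t+1}=v_t$, the weight $b_{v_t}$ if $v_{t+1}=v_t+1$, and the weight $b_{v_t-1}$ if $v_{t+1}=v_t-1$. Passing to the relative height $r_t:=v_t-j$ turns each $\pi$ into a closed walk on $\mathbb{Z}$ from $0$ to $0$ of length $l$, i.e. a super-Motzkin path in the sense of \cite{Joris2015}.

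Next I would remove the boundary. A closed walk of length $l$ deviates from its starting site by at most $\lfloor l/2\rfloor$, since each unit of height must be paid for by one up- and one down-step; hence for $\lfloor l/2\rfloor<j<N-\lfloor l/2\rfloor$ every abstract walk stays inside $\{1,\dots,N\}$, all walk shapes are realisable, and the weights depend on $j$ only through the shift $i\mapsto j+i$. For such a walk let $n_i$ be the number of up-steps across the edge joining relative levels $i$ and $i+1$ (equal to the number of down-steps there, by closedness) and $k_i$ the number of self-loops at relative level $i$. Then $w(\pi)=\prod_i b_{j+i}^{2n_i}\prod_i a_{j+i}^{k_i}$ and $\sum_i(2n_i+k_i)=l$. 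Collecting walks with identical statistics $(\bn,\bk)$ produces exactly the monomial expansion \eqref{eq:general_super_motzkin}, with $\rho^{(l)}(\bn,\bk)$ equal to the number of closed walks realising $(\bn,\bk)$; it remains to characterise the admissible $(\bn,\bk)$ and to compute this number.

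For admissibility I would note that the set of relative levels a walk through $0$ visits is an interval containing $0$: if the edge above a level $i\ge 0$ is never used the walk cannot reach higher levels, forcing $n_{i+1}=k_{i+1}=0$, and symmetrically below $0$; these are precisely the support conditions defining $\cA^{(l)}$, and a $(\bn,\bk)$ violating them is realised by no walk. For the count I would peel levels off from $0$ outward. At level $0$ the walk is a free linear arrangement of its $k_0$ self-loops, its $n_0$ maximal excursions into the positive half-line, and its $n_{-1}$ maximal excursions into the negative half-line, so there are $\binom{n_{-1}+n_0+k_0}{k_0,\,n_0,\,n_{-1}}$ of them, which factors as the two distinguished binomials $\binom{n_{-1}+n_0+k_0}{k_0}\binom{n_{-1}+n_0}{n_0}$. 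A level $m>0$ is entered from below exactly $n_{m-1}$ times, and inside these $n_{m-1}$ framed visits one interleaves, in order, the $n_m$ deeper excursions and the $k_m$ self-loops; splitting an ordered list of $n_m$ and $k_m$ marked items into $n_{m-1}$ possibly-empty runs can be done in $\binom{n_{m-1}+n_m+k_m-1}{k_m,\,n_m,\,n_{m-1}-1}$ ways, which factors into the corresponding pair of binomials appearing in the product \eqref{rhom}. The negative levels are handled by the same argument with the two half-lines interchanged, and multiplying the per-level counts gives $\rho^{(l)}(\bn,\bk)$.

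The main obstacle is this final enumeration: one must prove that a closed walk is reconstructed \emph{bijectively}, with neither over- nor under-counting, from the independent per-level interleaving data, and one must track carefully the asymmetry between the two half-lines so that in each factor $\binom{n_i+n_{i+1}-1}{\,\cdot\,}$ the unit shift sits on the edge pointing away from $0$ at that level (on $n_{i+1}$ for $i\ge 0$, on $n_i$ for $i\le-2$). I would make this rigorous by induction on the number of visited levels: remove the outermost occupied level, where the walk is a single concatenation of returns, verify that re-inserting its excursions into the adjacent level is a bijection realising the claimed multinomial, and finally glue the positive and negative halves through the level-$0$ arrangement. Both the gluing and the empty-run bookkeeping reduce to the stars-and-bars identity used above, which is where the $-1$ in \eqref{rhom} originates.
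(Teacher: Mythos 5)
Your route --- expanding $T_N^l(j,j)$ over closed walks of a tridiagonal matrix, passing to relative heights, recording the edge-crossing numbers $n_i$ and loop numbers $k_i$, and enumerating per level by stars and bars --- is not a different route at all: the present paper offers no proof of Theorem \ref{thm:structure_moments}, which is imported verbatim from \cite[Theorem 3.1]{Grava2020}, and the argument there is precisely this super-Motzkin counting. Your boundary reduction (a closed walk of length $l$ deviates by at most $\lfloor l/2\rfloor$, so for $\lfloor l/2\rfloor<j<N-\lfloor l/2\rfloor$ no constraint from the matrix edges is felt), your level-$0$ multinomial $\binom{n_{-1}+n_0+k_0}{k_0}\binom{n_{-1}+n_0}{n_0}$, and your positive-level interleaving factors reproduce \eqref{rhom} exactly; the bijectivity you defer to an induction on the outermost occupied level is the standard Flajolet--Viennot-type decomposition and presents no genuine obstacle.

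The asymmetry you flag in your last paragraph is the crux, and you should push it one step further: your (correct) count does not match the statement as literally printed --- it proves a mirrored correction of it. Two concrete checks. First, for $l=2$ one has $T_N^2(j,j)=a_j^2+b_j^2+b_{j-1}^2$, but the configuration $n_{-1}=1$ (all other entries zero), which produces $b_{j-1}^2$, violates the printed condition of \eqref{cAm} at $i=-1$ (there $n_0=0$ would force $n_{-1}=k_{-1}=0$), so the printed set $\cA^{(2)}$ drops a required term. Second, for $l=6$ and $n_0=n_{-1}=n_{-2}=1$, $\bk=0$ --- a configuration that \emph{does} lie in the printed set --- there are exactly two realising walks ($0,1,0,-1,-2,-1,0$ and $0,-1,-2,-1,0,1,0$), while \eqref{rhom} as printed contains the factor $\binom{n_{-3}+n_{-2}-1}{n_{-2}}=\binom{0}{1}=0$. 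With your placement of the indices (bottom index $n_{i+1}$ for $i\ge 0$ but $n_i$ for $i\le -2$, i.e.\ always the edge pointing away from the origin) and the correspondingly mirrored support conditions ($n_{i+1}=0\Rightarrow n_i=k_{i+1}=0$ for $i\le -2$), both checks pass; one also needs the convention $\binom{m}{0}=1$ even for $m=-1$ in the factors where all entries vanish. So you should state explicitly that what your argument establishes is this corrected version --- the negative-side indices in \eqref{cAm}--\eqref{rhom} are a transcription slip, which fortunately is harmless for the rest of the paper since the moment computations of Lemma \ref{lem:relation} use only the $j$- and $N$-independence of $\cA^{(l)}$ and $\rho^{(l)}$, not their precise values --- and then your plan, with the induction actually carried out, is a complete and correct proof.
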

	
	\begin{remark}
		\label{rem:tutto_finito}
		Formula \eqref{eq:general_super_motzkin} holds for $ \lfloor{l/2}\rfloor < j < N -  \lfloor{l/2}\rfloor$, for the other values of $j$ the formula is slightly different.  This  is because for $j\leq \lfloor l/2\rfloor$ or $j \geq N - \lfloor l/2\rfloor$ the polynomial $h_j^{(l)}$ is related to a constrained Super Motzkin path,  \cite{Mansour2013},  instead for $ \lfloor{l/2}\rfloor < j < N -  \lfloor{l/2}\rfloor$ it is related to a classical Super Motzkin path. In any case the polynomial $h_j^{(l)}$ is independent from
		$N$ for all $j$. 
	\end{remark}
	
	We remark that both $|\cA_l|$ and $\rho^{(l)}(\bn,\bk)$ do not depend on $N$ and $j$. Moreover,  from the condition $\sum_{i= - \lfloor{l/2}\rfloor }^{ \lfloor{l/2}\rfloor-1} \left(2n_i + k_i\right) = l$ in \eqref{cAm} one gets that 
	\begin{equation}
	\label{rem:hmj_a}
	\begin{aligned}
	&l \text{ even} \quad \Longrightarrow \quad h^{(l)}_j \text{ contains only even polynomials in } \ba, \\
	&l \text{ odd} \quad \Longrightarrow \quad h^{(l)}_j \text{ contains only odd polynomials in } \ba.
	\end{aligned}
	\end{equation}
	
	To prove the almost sure convergence of the empirical spectral distributions $\di\nu_H^{(N)}, \di\nu_L^{(N)}$ and $\di\nu_J^{(N)}$ to their corresponding mean density of states, we will use two general results. The first one is the following Theorem proved in \cite{Nakano2020}:
		
	\begin{theorem}(cf. \cite[Theorem 2.2]{Nakano2020})
		\label{thm:nakano}
		Consider a random Jacobi matrix $T_N$ \eqref{eq:jacobi} and  assume that  $\{a_n\}_{n=1}^N$ and $\{b_n\}_{n=1}^{N-1}$ have all finite moments. Then for any non trivial polynomial $P(x)$:
		\begin{align}
		\label{eq:almost_sure_mom}
		&\la \di \nu_T^{(N)}, P(x) \ra \stackrel{a.s.}{\to}  \la \wo{\di \nu_T},P(x)\ra \quad \text{ as } \, N\to \infty\, \\
		\label{eq:fluctuations}
		&\sqrt{N}\left(\la \di \nu_T^{(N)}, P(x) \ra - \la \wo{\di \nu_T},P(x)\ra\right) \stackrel{d}{\to} \cN(0,\sigma_P^2) \, \quad \text{as } \, N\to \infty ,
		\end{align}
		for some constant $\sigma_P^2 \geq 0$. Here $\stackrel{a.s.}{\to}$ is the almost sure convergence and $\stackrel{d}{\to}$ is the convergence in distribution.
	\end{theorem}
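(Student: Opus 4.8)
The plan is to reduce the two claims to statements about traces of powers of $T_N$ and then to exploit the \emph{finite-range} structure of those traces supplied by Theorem~\ref{thm:structure_moments}. By linearity of $\la\,\cdot\,,\,\cdot\,\ra$ it suffices to treat monomials $P(x)=x^l$, for which
\begin{equation}
\la \di\nu_T^{(N)}, x^l\ra \;=\; \frac{1}{N}\,\mathit{Tr}(T_N^l) \;=\; \frac{1}{N}\sum_{j=1}^N h_j^{(l)}(\bb,\ba)\,.
\end{equation}
The key point, read off from \eqref{eq:general_super_motzkin}, is that for every interior index $\lfloor l/2\rfloor < j < N-\lfloor l/2\rfloor$ the variable $h_j^{(l)}$ is one and the same polynomial in the entries $\{a_{j+i},b_{j+i}\}$ supported on a window of width at most $l$ about $j$, with coefficients independent of $N$ and $j$. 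Since the entries are i.i.d., the family $\{h_j^{(l)}\}_j$ is therefore \emph{stationary}, and since two windows centred more than $l$ apart are disjoint it is \emph{$l$-dependent}. Finally, as $h_j^{(l)}$ is a polynomial in finitely many entries, the assumption that all $a_n,b_n$ have finite moments forces $h_j^{(l)}$ to have finite moments of every order; in particular $c_l:=\meanval{h_0^{(l)}}$ and $\variance{h_0^{(l)}}$ are finite.

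For the almost sure convergence \eqref{eq:almost_sure_mom} I would couple all the matrices to a single infinite i.i.d. array of entries and write $h_j^{(l)}=g_l(\tau^j\omega)$, where $\tau$ is the (ergodic) shift and $g_l$ is a fixed measurable function. Birkhoff's ergodic theorem then gives $\frac1N\sum_j h_j^{(l)}\stackrel{a.s.}{\to}c_l$. The $O(l)$ boundary indices, governed by the constrained paths of Remark~\ref{rem:tutto_finito}, contribute negligibly almost surely, since there are only finitely many of them and each has finite mean. Because $\frac1N\meanval{\mathit{Tr}(T_N^l)}=c_l+O(l/N)$ by \eqref{eq:alternative_moment}, the deterministic limit is exactly $\la\wo{\di\nu_T},x^l\ra=c_l$; extending by linearity to all polynomials $P$ yields \eqref{eq:almost_sure_mom}.

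For the fluctuations \eqref{eq:fluctuations}, after discarding the boundary I would write
\begin{equation}
\sqrt N\!\left(\la\di\nu_T^{(N)},x^l\ra - c_l\right) \;=\; \frac{1}{\sqrt N}\sum_{j}\left(h_j^{(l)}-\meanval{h_j^{(l)}}\right) + o_{\mathbb{P}}(1)\,,
\end{equation}
the error $o_{\mathbb{P}}(1)$ absorbing both the centred boundary sum (of $L^2$ norm $O(\sqrt{l/N})$) and the bias $\sqrt N\,(c_l-\frac1N\meanval{\mathit{Tr}(T_N^l)})=O(l/\sqrt N)$. The right-hand side is a normalised partial sum of a centred, stationary, $l$-dependent sequence of finite variance, so the central limit theorem for $m$-dependent stationary sequences (Hoeffding--Robbins, Diananda) gives convergence to $\cN(0,\sigma_l^2)$ with $\sigma_l^2=\variance{h_0^{(l)}}+2\sum_{k=1}^{l}\mathrm{Cov}(h_0^{(l)},h_k^{(l)})$, a finite sum by $l$-dependence. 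For a general $P=\sum_l c_l x^l$ the same reasoning applies to the stationary, finitely dependent local variables $\sum_l c_l\, h_j^{(l)}$, producing a single limit $\cN(0,\sigma_P^2)$ with $\sigma_P^2\ge0$; alternatively one uses the Cram\'er--Wold device on the vector $(\mathit{Tr}(T_N^l))_l$.

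The main obstacle I expect lies not in the law of large numbers but in the fluctuation statement: one must check that the boundary terms, whose law differs from that of the bulk, really are irrelevant at the $\sqrt N$ scale, and that the $m$-dependent central limit theorem is genuinely applicable. Both hinge on the $N$- and $j$-independent finite-range structure of $h_j^{(l)}$ guaranteed by Theorem~\ref{thm:structure_moments}, together with the finiteness of all moments of the entries; given these, the finiteness of $\sigma_P^2$ is automatic and its possible degeneracy $\sigma_P^2=0$ does no harm to the stated convergence.
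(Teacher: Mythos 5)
There is no internal proof to compare you against: the paper does not prove Theorem~\ref{thm:nakano} at all, but imports it from \cite[Theorem 2.2]{Nakano2020}, adding only the remark that the formulation has been adapted. Your proposal therefore has to be judged as a self-contained reconstruction, and as such it is correct. The route you take --- reduce by linearity to $\la \di\nu_T^{(N)},x^l\ra=\frac{1}{N}\mathit{Tr}(T_N^l)$, use Theorem~\ref{thm:structure_moments} to see that the interior entries $h_j^{(l)}=T_N^l(j,j)$ are a single fixed polynomial in the i.i.d.\ entries inside a window of width at most $l$ around $j$, hence form a stationary, $l$-dependent family with all moments finite, then apply Birkhoff (or an $m$-dependent strong law) for \eqref{eq:almost_sure_mom} and the Hoeffding--Robbins/Diananda central limit theorem for \eqref{eq:fluctuations}, treating a general $P$ either through the grouped local variables $\sum_l c_l h_j^{(l)}$ or by Cram\'er--Wold --- is the standard mechanism for polynomial linear statistics of Jacobi matrices with i.i.d.\ entries, and it is in substance the mechanism behind the cited result.

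Two points you pass over quickly do hold, but deserve explicit justification. First, the almost sure statement only makes sense once all the $T_N$ are realized on a common probability space, so the coupling to one infinite array is not a convenience but part of the meaning of the theorem; you are right to make it explicit. Second, the boundary terms ($j\leq\lfloor l/2\rfloor$ or $j\geq N-\lfloor l/2\rfloor$) have $N$-independent laws (Remark~\ref{rem:tutto_finito}) but involve entries whose indices move with $N$, so ``finitely many terms with finite mean'' should be turned into a Borel--Cantelli estimate, e.g.\ $\sum_N\mathbb{P}\left(|B_N|>\epsilon N\right)<\infty$ for the boundary sum $B_N$, using that its $N$-independent moments are finite; this is what upgrades in-probability negligibility to the almost-sure (and, at scale $\sqrt N$, the distributional) negligibility you need. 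With these two observations, the remaining ingredients of your argument --- the identification $c_l=\meanval{h_0^{(l)}}=\la\wo{\di\nu_T},x^l\ra$ via \eqref{eq:alternative_moment}, the $O(N^{-1/2})$ bias estimate, and the variance formula $\sigma_l^2=\mathrm{Var}\left(h_0^{(l)}\right)+2\sum_{k=1}^{l}\mathrm{Cov}\left(h_0^{(l)},h_k^{(l)}\right)$, which is automatically nonnegative --- are all correct.
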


We observe  that  Theorem~\ref{thm:nakano} is not  stated in the present form in  \cite{Nakano2020}  but this formulation is more convenient for our analysis. The second result is the following classical Lemma whose proof can be found in \cite{Duy2018,AndersonGuillonet}:
	
\begin{lemma}(cf. \cite[Lemma 2.2]{Duy2018})
	\label{lem:duy}
	Consider a sequence of random probability measures $\{\di \mu_n\}_{n=1}^\infty$ and $\di\mu$ a probability measure determined by its moments according to Lemma \ref{lem:bay}. Assume that any moment  of $\di\mu_n$ converges almost surely to the one of $\di\mu$. Then as $n\to \infty$ the sequence of measures $\{\di\mu_n\}_{n=1}^\infty$ converges weakly, almost surely, to $\di\mu$, namely for all bounded and continuous functions  $f$:
	
	\begin{equation}
	\la\di\mu_n, f \ra \to  \la\di\mu, f \ra \quad \text{a.s as } \, n\to \infty\, .
	\end{equation} 
	The convergences still holds for a continuous function $f$ of polynomials growth.
\end{lemma}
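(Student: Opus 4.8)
The plan is to reduce this almost-sure statement to the classical deterministic method of moments, applied pointwise on a single full-probability event. First I would fix, for each $k \in \N_0$, the event $\Omega_k$ on which the $k$-th moment $\la \di\mu_n, x^k \ra$ converges to $m_k := \la \di\mu, x^k \ra$; by hypothesis $\prob{\Omega_k} = 1$. Setting $\Omega_0 := \bigcap_{k \geq 0} \Omega_k$, a countable intersection of full-probability events, one still has $\prob{\Omega_0} = 1$, and on $\Omega_0$ \emph{all} moments of the now-deterministic sequence $\{\di\mu_n\}$ converge simultaneously to those of $\di\mu$. It then suffices to prove, for each fixed realization in $\Omega_0$, that moment-by-moment convergence to a moment-determined measure forces weak convergence.

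For that deterministic core I would argue as follows. Since $\la \di\mu_n, x^2 \ra \to m_2 < \infty$, the second moments are uniformly bounded, so Markov's inequality gives $\sup_n \mu_n(\{\abs{x} > R\}) \to 0$ as $R \to \infty$; hence $\{\di\mu_n\}$ is tight and, by Prokhorov's theorem, every subsequence admits a further subsequence converging weakly to some probability measure $\di\nu$. The crux is to identify $\di\nu$ with $\di\mu$: for each $k$ the uniform bound $\sup_n \la \di\mu_n, x^{2k} \ra < \infty$ renders the family $\{x^j\}$ uniformly integrable against $\{\di\mu_n\}$ for every $j < 2k$, which permits passing the moment convergence through the weak limit to obtain $\la \di\nu, x^j \ra = \lim_n \la \di\mu_n, x^j \ra = m_j$ for all $j$. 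Since $\di\mu$ is determined by its moments via Lemma \ref{lem:bay}, this forces $\di\nu = \di\mu$. As every subsequential weak limit thus equals $\di\mu$, the full sequence converges weakly to $\di\mu$; reading this back over $\Omega_0$ yields the asserted almost-sure weak convergence.

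To upgrade from bounded continuous $f$ to continuous $f$ of polynomial growth, say $\abs{f(x)} \leq C(1 + \abs{x}^p)$, I would run a truncation argument on $\Omega_0$. Choosing a continuous cutoff $\varphi_R$ with $\mathbf 1_{[-R,R]} \leq \varphi_R \leq \mathbf 1_{[-R-1,R+1]}$, the product $f\varphi_R$ is bounded and continuous, so $\la \di\mu_n, f\varphi_R \ra \to \la \di\mu, f\varphi_R \ra$; the remainder obeys $\abs{\la \di\mu_n, f(1-\varphi_R) \ra} \leq C\, \la \di\mu_n, (1+\abs{x}^p)\mathbf 1_{\{\abs{x}>R\}} \ra$, and the uniform boundedness of $\la \di\mu_n, \abs{x}^{p+1} \ra$ drives this to $0$ uniformly in $n$ as $R \to \infty$. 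Sending $n \to \infty$ first and then $R \to \infty$ closes the estimate.

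I expect the main obstacle to be the uniform-integrability step that lets the moments pass through the weak limit, since this is precisely where the boundedness of the higher moments on $\Omega_0$ is indispensable; the determinacy hypothesis of Lemma \ref{lem:bay} then enters to pin down the subsequential limit uniquely, without which tightness alone would not suffice.
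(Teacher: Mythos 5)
Your proof is correct and follows exactly the standard method-of-moments argument: intersecting the countably many full-probability events, then tightness via Markov, Prokhorov, uniform integrability to pass moments through subsequential weak limits, determinacy to identify the limit, and truncation for polynomial-growth test functions. The paper itself gives no proof of this lemma, deferring to \cite{Duy2018,AndersonGuillonet}, and the argument in those references is essentially the one you wrote, so there is nothing to flag beyond trivial glosses (e.g.\ bounding odd absolute moments by even ones, and the vanishing of the remainder $\la \di\mu, f(1-\varphi_R)\ra$ as $R\to\infty$, which follows since $\di\mu$ has all moments finite).
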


	Finally, before moving to the actual proof of our main theorem, we summarize the main results of \cite{Duy2015,Trinh2019,Trinh2020,Allez2012,Allez2013} in the following theorem.
	\begin{theorem}
		\label{thm:ht_regime}
		As $N\to \infty, \beta N \to 2\alpha\in (0,\infty), \frac{N}{M} \to \gamma \in (0,1)$, $a+\alpha > 0$, $b+\alpha > 0$ and $a\not\in \N$, the mean spectral measure and the mean density of state of the Gaussian, Laguerre and Jacobi $\beta$-ensembles  weakly converge to  the non random measures with density $ \mu_{\alpha}(x), \mu_{\alpha,\gamma}(x)$ and $ \mu_{\alpha,a,b}(x) $   defined in \eqref{eq:spectralGaussian}, \eqref{eq:spectralLaguerre} and \eqref{eq:explicit_jacobi_TT} respectively.
		Moreover \eqref{eq:divergence_moments} holds for their moments sequences. 
	\end{theorem}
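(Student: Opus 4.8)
The plan is to separate the statement into its two independent claims and dispatch each in turn. The weak convergence of the \emph{mean spectral} measures $\di\bar\mu^{(N)}_{H_\beta}$, $\di\bar\mu^{(N)}_{L_\beta}$, $\di\bar\mu^{(N)}_{J_\beta}$ to the measures with densities $\mu_\alpha$, $\mu_{\alpha,\gamma}$, $\mu_{\alpha,a,b}$ is precisely the content of \cite{Duy2015,Trinh2019,Trinh2020,Allez2012,Allez2013} under the stated hypotheses $a+\alpha>0$, $b+\alpha>0$, $a\notin\N$ and $N/M\to\gamma$, so I would simply invoke those results. To upgrade this to the \emph{mean density of states} (the mean empirical measure $\di\bar\nu^{(N)}$), I would recall the observation made in the introduction: for the $\beta$-ensembles the first-coordinate weights $\{q_j\}$ are independent of the eigenvalues and distributed as a normalized vector of i.i.d.\ $\chi_\beta$'s, whence $\meanval{q_j^2}=1/N$ and therefore $\di\bar\nu^{(N)}=\di\bar\mu^{(N)}$ for each ensemble. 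Consequently the mean density of states converges weakly to the very same limit, with no additional work.

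What genuinely remains is the verification of the growth condition \eqref{eq:divergence_moments} for the moment sequences of the three limiting densities, which, via Lemma \ref{lem:bay}, is exactly what renders them moment-determinate. I would treat the three measures separately according to the nature of their support. For the Jacobi measure $\mu_{\alpha,a,b}$ the support is the compact interval $[0,1]$, so every moment satisfies $u^{(2l)}\le 1$ and hence $(u^{(2l)})^{1/2l}/l\le 1/l\to 0$; the condition is immediate. For the Laguerre and Gaussian measures the support is unbounded and the estimate requires controlling the tail of the density.

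The key point is that in both unbounded cases the density is a Gaussian or exponential weight divided by the squared modulus of a special function, and that reciprocal modulus grows only polynomially. Concretely, from the large-argument asymptotics of Tricomi's function one expects $\psi(\alpha,-\alpha/\gamma;xe^{-i\pi})\sim c\,x^{-\alpha}$ as $x\to+\infty$, so that $\mu_{\alpha,\gamma}(x)\sim c'\,x^{\alpha/\gamma+2\alpha}e^{-x}$ has an exponential tail; likewise, the endpoint asymptotics of the one-sided transform defining $\wh f_\alpha$ give $\wh f_\alpha(x)\sim c\,|x|^{-\alpha}$ as $|x|\to\infty$, so that $\mu_\alpha(x)\sim c'\,x^{2\alpha}e^{-x^2/2}$ retains a Gaussian tail. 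If the density decays like $|x|^{q}e^{-c|x|^{p}}$ at infinity, then $u^{(2l)}$ grows like $(c'\,l)^{2l/p}$ up to subexponential factors, so that $(u^{(2l)})^{1/2l}/l$ is of order $l^{1/p-1}$; this is bounded for $p=1$ (Laguerre) and tends to $0$ for $p=2$ (Gaussian), and \eqref{eq:divergence_moments} holds in both cases.

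The hard part will be precisely the tail control invoked in the previous paragraph: one must pin down the genuine large-$x$ (and, in the Gaussian case, two-sided large-$|x|$) behaviour of $\wh f_\alpha$ and of $\psi(\alpha,-\alpha/\gamma;\,\cdot\,)$, and in particular exclude the possibility that these functions vanish or decay faster than the stated power along the real axis, since such behaviour would fatten the tail of the reciprocal and could break \eqref{eq:divergence_moments}. Once the polynomial lower bounds on $|\wh f_\alpha|$ and $|\psi|$ are established — these being available from the standard asymptotic theory of confluent hypergeometric functions recalled in Appendix \ref{appendixA}, where these densities are identified as the orthogonality measures of the associated Hermite and Laguerre polynomials — the moment estimates are routine.
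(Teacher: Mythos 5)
Your proposal is correct, and in substance it follows the same route as the paper: the paper supplies no proof of Theorem \ref{thm:ht_regime} at all, presenting it explicitly as a summary of the results of \cite{Duy2015,Trinh2019,Trinh2020,Allez2012,Allez2013}, and the identification of the mean density of states with the mean spectral measure through the independence of the weights $q_j$ from the eigenvalues and $\meanval{q_j^2}=\frac{1}{N}$ is stated verbatim in the paper's introduction, so your first two steps reproduce exactly what the paper (implicitly) does. Where you go beyond the paper is the determinacy condition \eqref{eq:divergence_moments}, which the paper also delegates to the cited works: your verification is sound --- compact support disposes of the Jacobi case; for the Laguerre case the normalization $\psi(a,b;z)\sim z^{-a}$ for $|\arg z|\leq \tfrac{3}{2}\pi$ recalled in the appendix applies to $z=xe^{-i\pi}$ and yields $\mu_{\alpha,\gamma}(x)\sim c\, x^{\alpha/\gamma+2\alpha}e^{-x}$; for the Gaussian case an Erd\'elyi-type endpoint expansion of the Fourier integral defining $\wh f_{\alpha}$ yields $|\wh f_{\alpha}(x)|\sim \sqrt{\alpha\Gamma(\alpha)}\,|x|^{-\alpha}$, hence $\mu_\alpha(x)\sim c\,|x|^{2\alpha}e^{-x^2/2}$ --- and your estimate that a tail of the form $|x|^{q}e^{-c|x|^{p}}$ gives $(u^{(2l)})^{1/2l}/l = O\left(l^{1/p-1}\right)$ settles both $p=1$ (bounded, which suffices for the liminf condition) and $p=2$ (vanishing). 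The lower-bound worry you flag concerns only the behaviour at infinity (a zero of $\wh f_{\alpha}$ or of $\psi$ at a finite real point would affect integrability locally, not the tails), and it is excluded precisely by the two asymptotic statements above, whose leading constants are nonzero; so your sketch is a legitimate, self-contained closure of the one item the paper leaves entirely to the references.
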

	
	%We remark  that the above  theorem is not originally stated in this form. Indeed Theorem \ref{thm:ht_regime} is a summary of three different results contained in \cite{Duy2015,Trinh2019,Trinh2020}, but this equivalent form is most suitable for our analysis.
	
	\section{Proof of the main result}
	
	We are now in position to prove our main result. First of all we remark that the density $\partial_{\alpha}(\alpha\mu_\alpha(x)),\, \partial_{\alpha}(\alpha\mu_{\alpha,\gamma}(x))$ and $\partial_{\alpha}(\alpha\mu_{\alpha,a,b}(x))$ define a 
	probability  measure since the densities $\mu_\alpha(x),\, \mu_{\alpha,\gamma}(x)$ and $\mu_{\alpha,a,b}(x)$ define a probability measure. Then, since we want to apply the moment method, we have to compute the moments of the  $\alpha$-ensembles explicitly. To conclude the proof we will need also an explicit expression of the moments of the mean density of states of the $\beta$-ensembles. The following lemma lays the ground to conclude both computations.
	\begin{lemma}
		\label{lem:relation}
		Fix $\alpha \in \R^+\setminus\{0\},\, \gamma \in(0,1),a,b > -1$ and $N/2 > l \in \N$. Consider the $\alpha$ and $\beta$-ensembles in Table \ref{tab:alpha_ensemble}-\ref{tab:beta_ensemble},
		there exist polynomials $w_l(x), g_l(x)$, and rational and continuous functions $r_l(x)$ such that, for $N$ large enough and $\beta N = 2\alpha, \frac{N}{M} = \gamma$,  the following holds, for $\lfloor{l/2}\rfloor<j<N-\lfloor{l/2}\rfloor$ :
		
		\begin{align}
		\label{eq:mom_gaussian_beta}
		&\meanval{h_j^{(l)} }_{H_\beta } = \begin{cases}
		w_l\left(\alpha\left(1-\frac{j}{N}\right)\right)  + O\left(N^{-1}\right) \qquad &l \text{ even}\\
		0 \qquad &l \text{ odd}
		\end{cases} ,\\
		\label{eq:mom_gaussian_alpha}
		&\meanval{h_j^{(l)} }_{H_\alpha} = \begin{cases}
		w_l(\alpha )  \qquad& l \text{ even}\\
		0 \qquad &l \text{ odd}
		\end{cases}\,,\\
		&\meanval{h_j^{(l)} }_{L_\beta } = g_l\left(\alpha\left(1-\frac{j}{N}\right)\right)  + O\left(N^{-1}\right)\, ,\\
		&\meanval{h_j^{(l)} }_{L_\alpha}= g_l(\alpha )	\, , \\
		&\meanval{h_j^{(l)} }_{J_\beta} = r_l\left(\alpha\left(1-\frac{j}{N}\right)\right)  + O\left(N^{-1}\right)\, ,\\
		&\meanval{h_j^{(l)} }_{J_\alpha} = r_l(\alpha )\, .	
		\end{align}
	\end{lemma}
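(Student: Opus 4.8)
The plan is to start from the closed form for the diagonal entry $h_j^{(l)}$ supplied by Theorem~\ref{thm:structure_moments} and to take its expectation one monomial at a time. Since the index set $\cA^{(l)}$ and the combinatorial weights $\rho^{(l)}(\bn,\bk)$ are deterministic and, by Remark~\ref{rem:tutto_finito} and the ensuing observation, independent of both $N$ and $j$, linearity of the expectation reduces the whole computation to evaluating the single–monomial averages $\meanval{\prod_i b_{j+i}^{2n_i}\prod_i a_{j+i}^{k_i}}$ for each ensemble. The lemma then rests on two facts: (i) the relevant one–variable moments of the matrix entries are polynomial (Gaussian, Laguerre) or rational (Jacobi) functions of the shape parameter indexing the corresponding $\chi$- or Beta-law; and (ii) in the $\beta$-ensembles this shape parameter, evaluated at index $j+i$, equals its value at index $j$ up to $O(1/N)$, because $l$ is fixed and hence the shift $|i|\leq\lfloor l/2\rfloor$ is bounded.

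For the Gaussian ensembles the entries $a_n\sim\cN(0,2)$ and the $b_n$ are mutually independent, so the monomial average factorizes as $\prod_i\meanval{a_{j+i}^{k_i}}\prod_i\meanval{b_{j+i}^{2n_i}}$. The Gaussian moments $\meanval{a^{k}}$ are parameter-free constants that vanish for odd $k$; by \eqref{rem:hmj_a} every monomial contributing to an odd-$l$ term carries an odd total power of $\ba$, so at least one factor has odd degree and the average vanishes, giving $\meanval{h_j^{(l)}}_{H_\alpha}=\meanval{h_j^{(l)}}_{H_\beta}=0$. For even $l$ I would use $\meanval{b^{2m}}=2^{m}(\sigma)_m$ for $b\sim\chi_{2\sigma}$, a polynomial of degree $m$ in the shape $\sigma$, with $\sigma=\alpha$ for $H_\alpha$ and $\sigma=\alpha(1-(j+i)/N)$ for $H_\beta$. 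Assembling the finitely many factors in \eqref{eq:general_super_motzkin} defines a single polynomial $w_l$ with $\meanval{h_j^{(l)}}_{H_\alpha}=w_l(\alpha)$ exactly (all entries are identically distributed), while the expansion $\alpha(1-(j+i)/N)=\alpha(1-j/N)+O(1/N)$ and the smoothness of $w_l$ yield $w_l(\alpha(1-j/N))+O(1/N)$ for $H_\beta$.

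For Laguerre and Jacobi the tridiagonal entries are no longer independent, so I would first rewrite $h_j^{(l)}$ in the genuinely independent variables. Writing $L=BB^\intercal$ gives entries $a_n=(x_n^2+y_{n-1}^2)/2$ and $b_n=x_ny_n/2$; since only even powers $b_{j+i}^{2n_i}$ occur and $a_n$ is a sum of squares, every monomial is a polynomial in the even powers of the independent $\chi$-variables, whose moments are $\meanval{x^{2p}}=2^p(\alpha/\gamma)_p$ and $\meanval{y^{2q}}=2^q(\alpha)_q$ (and the analogous $\beta$-shapes). Two distinct shapes appear, so I record the answer as a two–variable polynomial $G_l(\sigma_x,\sigma_y)$ and observe that both ensembles lie on the line $\sigma_x=\sigma_y+\alpha(1-\gamma)/\gamma$: for $L_\alpha$ one has $\sigma_y=\alpha$, while for $L_\beta$ the identity $\tfrac{\alpha}{\gamma}-\tfrac{\alpha(j+i)}{N}=\alpha(1-j/N)+\alpha(1-\gamma)/\gamma+O(1/N)$ shows $\sigma_y=\alpha(1-j/N)+O(1/N)$. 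Setting $g_l(t):=G_l(t+\alpha(1-\gamma)/\gamma,\,t)$ then gives $g_l(\alpha)$ and $g_l(\alpha(1-j/N))+O(1/N)$ with the \emph{same} polynomial. The Jacobi case is identical in spirit: $J=DD^\intercal$ expresses the entries through the independent Beta variables $p_n,q_n$, only even powers of $b$ appear, and each Beta moment $\meanval{X^m}=(u)_m/(u+v)_m$ is rational in its parameters; here the single replacement $\alpha\mapsto\alpha(1-n/N)$ acts identically on every Beta parameter, producing one rational function $r_l$ with $r_l(\alpha)$ for $J_\alpha$ and $r_l(\alpha(1-j/N))+O(1/N)$ for $J_\beta$.

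The principal obstacle I anticipate is bookkeeping rather than conceptual. One must verify that the same function represents both the $\alpha$- and $\beta$-averages, which for Laguerre forces the two–parameter reduction above, and one must control the remainder uniformly: the $O(1/N)$ claim requires the shape parameters $\sigma(j+i)$ to stay bounded away from their singular values and the derivatives of $w_l,g_l,r_l$ to be bounded on the relevant compact range of arguments. This is guaranteed because $j$ lies in the bulk $\lfloor l/2\rfloor<j<N-\lfloor l/2\rfloor$, the shift $i$ is bounded by the fixed $l$, and the hypotheses $a,b>-1$ keep all Beta parameters, hence the Pochhammer denominators $(2\alpha+a+b+2)_m$, strictly positive, so $r_l$ is rational and continuous on $[0,\alpha]$. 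Once these uniform bounds are in place, summing the finitely many monomials preserves the polynomial/rational form and the $O(1/N)$ remainder, completing the proof.
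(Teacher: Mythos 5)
Your proof is correct and follows essentially the same route as the paper's: expand $h_j^{(l)}$ via Theorem~\ref{thm:structure_moments}, use linearity and independence together with the facts that even moments of $\chi_{2\sigma}$ are polynomials in $\sigma$ and Beta moments are rational in their parameters, and absorb the bounded index shift $|i|\leq\lfloor l/2\rfloor$ into an $O(N^{-1})$ error. You actually go further than the paper, which proves only the Gaussian case and declares the other cases ``similar'': your rewriting of the Laguerre and Jacobi entries in terms of the underlying independent $\chi$ and Beta variables (handling the non-independence of the tridiagonal entries of $B_{\alpha,\gamma}B_{\alpha,\gamma}^\intercal$), and your reduction of the two Laguerre shape parameters to a single polynomial via the line $\sigma_x=\sigma_y+\alpha(1-\gamma)/\gamma$, fill in precisely the bookkeeping the paper glosses over.
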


	\begin{proof}[Proof of Lemma \ref{lem:relation}]
		
		We will just prove \eqref{eq:mom_gaussian_beta}-\eqref{eq:mom_gaussian_alpha} since the proof of the other cases is similar. Indeed the only difference in the proofs is that for the Gaussian and Laguerre $\alpha$ and $\beta$-ensembles we use the fact that the expected value of any even monomial with respect to a $\chi_\xi$-distribution is a monomial in $\xi$. While for the Jacobi $\alpha$ and $\beta$-ensembles we use the fact that the expected values of any monomial with respect to a $\text{Beta}(a,b)$-distribution is a rational functions of the parameters.
		
		First of all, since {$\ba=(a_1,\dots,a_N)$} are normal distributed for both ensembles and thanks to \eqref{rem:hmj_a} we get that
		
		\[ \meanval{h_j^{(l)} }_{H_\alpha } = \meanval{h_j^{(l)} }_{H_\beta} = 0\,, \qquad l \text{ odd}\, .\]
		
		For the Gaussian $\alpha$ ensemble we have that, for $ \lfloor{l/2}\rfloor < j < N- \lfloor{l/2}\rfloor$
		$$\meanval{h_j^{(l)}}_{H_\alpha} = \meanval{\sum_{(\bn,\bk)\in \cA^{(l)}} \, \rho^{(l)}(\bn,\bk) \prod_{i = - \lfloor{l/2}\rfloor}^{ \lfloor{l/2}\rfloor-1} b_{j+i}^{2{n_i}} \prod_{i = - \lfloor{l/2}\rfloor+1 }^{ \lfloor{l/2}\rfloor -1} a_{j+i}^{k_i}}_{H_\alpha}$$
		is independent from $j$ since $b_{j+i}\sim \chi_{2\alpha}, a_i \sim\cN(0,2)\,, \, i = -\lfloor l/2 \rfloor, \ldots, \lfloor l/2 \rfloor$, and the coefficients $\rho^{(l)}(\bn,\bk)$ and the set $\cA^{(l)}$ are independent from $j$ and $N$ by Theorem \ref{thm:structure_moments}. {Moreover, as already pointed out,  the expected values of any even monomial with respect to a $\chi_\xi$-distribution is a monomial in $\xi$.} Thus we have that for fixed $l\in\N$, there exists a polynomial $w_l(\alpha)$ such that \eqref{eq:mom_gaussian_alpha} holds.
		
		We can apply a similar reasoning for the Gaussian $\beta$ ensemble, indeed we notice that if we approximate the distribution of $b_{j+i}\sim\chi_{2\alpha\left(1-\frac{j+i }{N}\right)}$, $i=- \lfloor{l/2}\rfloor, \ldots,  \lfloor{l/2}\rfloor$ with the one of $b_{j}\sim \chi_{2\alpha\left(1-\frac{j}{N}\right)}$ we get an error of order $N^{-1}$ when we evaluate the expected value. So we can compute 
		\begin{equation}
		\begin{split}
		\meanval{h_j^{(l)} }_{H_\beta } &= \meanval{\sum_{(\bn,\bk)\in \cA^{(l)}}\, \rho^{(l)}(\bn,\bk) \prod_{i = - \lfloor{l/2}\rfloor}^{ \lfloor{l/2}\rfloor-1} b_{j+i}^{2{n_i}} \prod_{i = - \lfloor{l/2}\rfloor+1 }^{ \lfloor{l/2}\rfloor -1} a_{j+i}^{k_i}}_{H_\beta}\\ &= w_l\left(\alpha\left(1-\frac{j}{N}\right)\right)  + O\left(N^{-1}\right) \, ,
		\end{split}
		\end{equation}
		
		where  the only difference from the previous case is that the parameter of the $\chi$-distribution is $2\alpha\left(1-\frac{j}{N}\right)$ instead of $2\alpha$.
		
	\end{proof}  
	
Using the above  lemma we can conclude the computation of the moments for the $\alpha$ and $\beta$-ensembles: 
	\begin{corollary}
		\label{cor:momenti_alpha}
		
		Fix $l \in \N$, $\alpha \in \R^+\setminus\{0\}$, $a,b > -1$ and $\gamma \in (0,1)$ then in the large $N$ limit, with $N\beta \to 2\alpha$ and $ \frac{N}{M}\to \gamma$, the following holds:
		
		\begin{align}
		\label{eq:final_mom_alpha}
		& u^{(l)}_{\alpha} := \lim_{N\to \infty} \meanval{ \frac{1}{N}\mathit{Tr}(H_\beta^{l}) }_{H_\beta}= \begin{cases}
		\int_{0}^1w_{l}\left(\alpha x\right)  \di x\qquad l \text{ even}\\
		0\qquad l \text{ odd}
		\end{cases}\, , \\ 
		\label{eq:final_mom_beta}
		&v^{(l)}_{\alpha} := \lim_{N\to \infty} \meanval{ \frac{1}{N}\mathit{Tr}(H_\alpha^{l}) }_{H_\alpha } =\begin{cases}
		w_{l}\left(\alpha \right) \qquad l \text{ even}\\
		0 \qquad l \text{ odd}
		\end{cases}\, ,\\
		&u^{(l)}_{\alpha,\gamma} := \lim_{N\to \infty} \meanval{ \frac{1}{N}\mathit{Tr}(L_\beta^l) }_{L_\beta }=\int_{0}^{1}g_l(\alpha x  ) \di x\, , \\ 
		& v^{(l)}_{\alpha,\gamma} := \lim_{N\to \infty} \meanval{ \frac{1}{N}\mathit{Tr}(L_\alpha^l) }_{L_\alpha} =g_l(\alpha )\, , \\
		& u^{(l)}_{\alpha,a,b} := \lim_{N\to \infty} \meanval{ \frac{1}{N}\mathit{Tr}(J_\beta^l) }_{J_\beta}=  \int_{0}^{1}r_l(\alpha x ) \di x \, ,\\ 
		& v^{(l)}_{\alpha,a,b} := \lim_{N\to \infty} \meanval{ \frac{1}{N}\mathit{Tr}(J_\alpha^l) }_{J_\alpha} = r_l(\alpha ) \, .
		\end{align}

	\end{corollary}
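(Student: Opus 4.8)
The plan is to combine the trace decomposition of Theorem~\ref{thm:structure_moments} with the per-site estimates of Lemma~\ref{lem:relation}, reducing each of the six limits to either a trivial averaging (in the $\alpha$-ensemble case) or a Riemann sum (in the $\beta$-ensemble case). I would treat the Gaussian case explicitly; the Laguerre and Jacobi cases follow verbatim with $w_l$ replaced by $g_l$ and $r_l$ respectively. First, using \eqref{eq:Jacobi_trace_periodic},
\begin{equation}
\frac{1}{N}\meanval{\mathit{Tr}(T_N^l)} = \frac{1}{N}\sum_{j=1}^N \meanval{h_j^{(l)}} ,
\end{equation}
and I split the sum into the bulk indices $\lfloor l/2\rfloor < j < N - \lfloor l/2\rfloor$ and the remaining $O(1)$ boundary indices. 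By Remark~\ref{rem:tutto_finito} each $h_j^{(l)}$ is a fixed polynomial in the matrix entries, independent of $N$; since all entries have finite moments of every order, $\meanval{h_j^{(l)}}$ is bounded uniformly in $N$ and $j$. Hence the boundary contribution is $O(1/N)$ and vanishes in the limit, so only the bulk matters.

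For the $\alpha$-ensemble, Lemma~\ref{lem:relation} gives $\meanval{h_j^{(l)}}_{H_\alpha} = w_l(\alpha)$ for every bulk index (for $l$ even; it is $0$ for $l$ odd). The bulk average is therefore $\tfrac{N-O(1)}{N}\, w_l(\alpha) \to w_l(\alpha)$, which is \eqref{eq:final_mom_beta}. For the $\beta$-ensemble, Lemma~\ref{lem:relation} gives $\meanval{h_j^{(l)}}_{H_\beta} = w_l\bigl(\alpha(1 - j/N)\bigr) + O(N^{-1})$ uniformly over the bulk (for $l$ even). The uniform errors contribute $\tfrac1N\cdot N\cdot O(N^{-1}) = O(N^{-1})$, while the leading terms form the Riemann sum
\begin{equation}
\frac{1}{N}\sum_{j} w_l\!\left(\alpha\Bigl(1 - \frac{j}{N}\Bigr)\right) \xrightarrow[N\to\infty]{} \int_0^1 w_l(\alpha(1-x))\,\di x = \int_0^1 w_l(\alpha x)\,\di x ,
\end{equation}
the last equality by the change of variables $x \mapsto 1-x$. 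This is \eqref{eq:final_mom_alpha}. The vanishing of the odd moments is immediate since $\meanval{h_j^{(l)}} = 0$ for odd $l$ in both ensembles.

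The main technical point to be careful about is the uniformity of the two error sources. The $O(N^{-1})$ term furnished by Lemma~\ref{lem:relation} must be uniform in $j$ across the bulk, so that summing $N$ copies and dividing by $N$ still leaves a vanishing remainder; and the boundary expectations must be bounded independently of $N$, which is guaranteed by the $N$-independence of $h_j^{(l)}$ in Remark~\ref{rem:tutto_finito} together with the finiteness of all moments of the entries. Once these are in hand, convergence of the Riemann sum only uses continuity of $w_l$, $g_l$ and $r_l$ on $[0,1]$, which holds since the first two are polynomials and $r_l$ is rational and continuous there. The Laguerre and Jacobi identities then follow by the identical argument, replacing $w_l$ by $g_l$ and $r_l$, with no odd/even dichotomy since those $h_j^{(l)}$ need not vanish for odd $l$.
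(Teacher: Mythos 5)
Your proposal is correct and follows essentially the same route as the paper's proof: decompose the trace via Theorem~\ref{thm:structure_moments}, discard the $O(1)$ boundary terms using Remark~\ref{rem:tutto_finito}, then apply Lemma~\ref{lem:relation} so that the $\alpha$-ensemble bulk average is constant and the $\beta$-ensemble bulk average becomes a Riemann sum converging to the stated integral. The only differences are cosmetic: you spell out the change of variables $x\mapsto 1-x$ and the uniformity-in-$j$ of the error terms, which the paper leaves implicit.
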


	\begin{proof}
		We will just prove \eqref{eq:final_mom_alpha}-\eqref{eq:final_mom_beta} since the proof of the other cases is analogous.
		
		From Lemma \ref{lem:relation} and Theorem \ref{thm:structure_moments} one gets that:
		
		\begin{equation}
		v_\alpha^{(l)} =\lim_{N\to \infty}\left[ \frac{1}{N}\left(\sum_{j= \lfloor{l/2}\rfloor+1}^{N- \lfloor{l/2}\rfloor-1} w_l(\alpha) + O(1)\right)\right] = w_l(\alpha)\, .
		\end{equation}
		Indeed neglecting the terms $h_j^{(l)}$ $j=1,\, \ldots, \lfloor l/2\rfloor, N- \lfloor l/2\rfloor,\, \ldots, N$  in the average of $\mathit{Tr}\left(H_\alpha^l\right)$ we get an error of order $O(1)$ since $l$ is fixed, so in the summations we are neglecting a finite number of terms of order $O(1)$, see Remark \ref{rem:tutto_finito}.
		
		For the same reason one gets that:
		\begin{equation}
		u_\alpha^{(l)} = \lim_{N\to \infty}\left[ \frac{1}{N}\sum_{j= \lfloor{l/2}\rfloor+1}^{N- \lfloor{l/2}\rfloor-1} w_l\left(\alpha\left(1-\frac{j}{N}\right)\right) + O(N^{-1}) \right]\, .
		\end{equation}
		
		Thus taking the limit for $N$ going to infinity one gets  { the integral in \eqref{eq:final_mom_alpha}. }
	\end{proof}

\begin{remark}
	\label{rem:mom}
			{We stress that $u_\alpha^{(l)}, u_{\alpha,\gamma}^{(l)} $ and $u_{\alpha,a,b}^{(l)}$ are respectively the $l^{th}$ moments of the Gaussian, Laguerre and Jacobi $\beta$-ensembles in the high temperature regime.
			Analogously, the quantities $v_\alpha^{(l)}, v_{\alpha,\gamma}^{(l)} $ and $v_{\alpha,a,b}^{(l)}$ are the $l^{th}$ moments of the Gaussian, Laguerre and Jacobi $\alpha$-ensembles respectively.}
\end{remark}
	
	We can now finish the proof of Thereon \ref{THM:MAIN}
	\begin{proof}[Proof of Theorem \ref{THM:MAIN}]
		From Corollary \ref{cor:momenti_alpha} one {concludes}  that for all fixed $l\in \N$:
		
		\begin{align}
		\label{eq:divergence_alpha}
		& v^{(l)}_{\alpha} = \partial_{\alpha}(\alpha u^{(l)}_\alpha)\,,\\
		\label{eq:divergence_alpha1}
		& v^{(l)}_{\alpha,\gamma} = \partial_\alpha (\alpha  u^{(l)}_{\alpha,\gamma})\, ,\\
		\label{eq:divergence_alpha2}
		& v^{(l)}_{\alpha,a,b} = \partial_\alpha (\alpha u^{(l)}_{\alpha,a,b})\, .
		\end{align}

		 By Theorem~\ref{thm:ht_regime}, Corollary \ref{cor:momenti_alpha} and Remark \ref{rem:mom}, the quantities $u^{(l)}_\alpha$, $u^{(l)}_{\alpha,\gamma}$ and $u^{(l)}_{\alpha,a,b}$ are the moments of the measures with density  $\mu_\alpha$, $\mu_{\alpha,\gamma}$ and $\mu_{\alpha,a,b}$ defined in 
			\eqref{eq:mean_gaussian}, \eqref{eq:mean_laguerre} and \eqref{eq:mean_jacobi}.
			Moreover by formula  \eqref{eq:divergence_moments}  such moments uniquely determine the corresponding measures.
			
			It follows from relation \eqref{eq:divergence_alpha} and Lemma \ref{lem:bay} that    the mean density of states 
			$\wo{\di \nu_H}$ of the Gaussian $\alpha$-ensemble
			coincides with  $\partial_{\alpha}(\alpha \mu_\alpha)$ with $\mu_\alpha$ as   \eqref{eq:mean_gaussian}.
			In a similar way, by \eqref{eq:divergence_alpha1},   the measure 
			$\partial_{\alpha}(\alpha u_{\alpha,\gamma})$ in \eqref{eq:mean_laguerre} is the mean density of states $ \wo{\di \nu_L}$ of the Laguerre $\alpha$-ensembles and $\partial_{\alpha}(\alpha u_{\alpha,a,b})$ 
			in \eqref{eq:mean_jacobi} is the mean density of states  $\wo{\di \nu_H}$ of
			the Jacobi $\alpha$-ensembles.
			
	Since for the $\alpha$-ensembles all $\{a_n\}_{n=1}^N$ and $\{b_n\}_{n=1}^{N-1}$ have all finite moments, one can apply Theorem \ref{thm:nakano} getting that the moments of the empirical spectral distributions of the $\alpha$- ensembles  $\di\nu_H^{(N)},\di\nu_L^{(N)}$ and $\di\nu_J^{(N)}$ converge almost surely to the ones of the corresponding mean density of states  	$\wo{\di \nu_H}, \wo{\di \nu_L}$ and $  \wo{\di \nu_J}$ in \eqref{eq:mean_gaussian}, \eqref{eq:mean_laguerre}  and \eqref{eq:mean_jacobi} respectively. 
	Furthermore applying Lemma \ref{lem:duy} one obtains that   the spectral distributions of the $\alpha$-ensembles  $\di\nu_H^{(N)},\di\nu_L^{(N)}$ and $\di\nu_J^{(N)}$ converge almost surely to 	$\wo{\di \nu_H}, \wo{\di \nu_L}$ and $ \wo{\di \nu_J}$ in \eqref{eq:mean_gaussian}-\eqref{eq:mean_laguerre} and \eqref{eq:mean_jacobi} respectively.	
	
	Finally from \eqref{eq:fluctuations} one gets that formula \eqref{eq:fluct_gaussian}--\eqref{eq:fluct_jacobi} hold, namely that the global fluctuations are Gaussian.
	\end{proof}

	\subsection{Parameters limit}
	
	In this section we study the behavior of  $\alpha$-ensembles when the parameter $\alpha$ goes to infinity.  For this purpose  we  consider  the {rescaled} version of {$\alpha$-}ensembles, i.e. the matrices defined as
	$\frac{1}{\sqrt{\alpha}}H_\alpha, \frac{\gamma}{\alpha}L_{\alpha,\gamma}$ and $J_\alpha$.  The corresponding mean density of states is rescaled to  $\di \wo \nu_H(\sqrt{\alpha }x), \di \wo \nu_L\left(\frac{\alpha x}{\gamma}\right)$ and
	 $\di \wo \nu_J(x)$ (see \eqref{eq:mean_gaussian}--\eqref{eq:mean_jacobi}).
	
	Now we have to compute the limits of these measures when $\alpha \to \infty$. We will compute these limits using the matrix representations of the normalized $\alpha$-ensemble and exploit the following weak limits:
	
	\begin{equation}
	\label{eq:weak_lim_distr}
	\lim_{\alpha\to \infty} \frac{\cN(0,2)}{\sqrt{\alpha}} \stackrel{d}{\to} 0 \qquad \lim_{\alpha\to \infty}\frac{\chi_\alpha}{\sqrt{\alpha}} \stackrel{d}{\to} 1 \qquad \lim_{\alpha\to \infty} \text{Beta}(\alpha,\alpha) \stackrel{d}{\to} \frac{1}{2}.
	\end{equation} 
	
	The above relations imply that the mean density of states of the three normalized $\alpha$-ensembles weakly converges to the mean density of states of the following matrices:
	
	\begin{equation}
	\begin{split}
	H_\infty &= \begin{pmatrix}
	0 & 1 \\
	1 & 0& 1 \\
	& \ddots & \ddots & \ddots\\
	&& \ddots & \ddots & 1 \\
	&&& 1 & 0
	\end{pmatrix}\,,\qquad 	L_\infty = \begin{pmatrix}
	1 &   \sqrt{\gamma} \\
	\sqrt{\gamma} & 1+\gamma & \sqrt{\gamma} \\
	& \ddots & \ddots & \ddots\\
	&& \ddots & \ddots & \sqrt{\gamma} \\
	&&& \sqrt{\gamma} & 1+\gamma
	\end{pmatrix}\, , \qquad \\	J_\infty &= \begin{pmatrix}
	\frac{1}{\sqrt{2}} & \frac{1}{2\sqrt{2}} \\
	\frac{1}{2\sqrt{2}} & \frac{1}{2}& \frac{1}{4} \\
	& \frac{1}{4}& \ddots & \ddots\\
	&& \ddots & \ddots & \frac{1}{4} \\
	&&& \frac{1}{4} & \frac{1}{2}
	\end{pmatrix}\, .
	\end{split}
	\end{equation}
	
		The eigenvalues distributions of the above matrices in the large $N$ limit are given by 
	
	\begin{align}
	\label{eq:lim_gaussian}
	&\lim_{\alpha\to \infty} \di \wo \nu_H (\sqrt{\alpha}x)= \frac{\mathbbm{1}_{(-2,2)}}{\pi \sqrt{4-x^2}}\di x\, , \\
	& \lim_{\alpha \to \infty}\di \wo \nu_L\left(\frac{\alpha x}{\gamma}\right) = \frac{\mathbbm{1}_{((1-\sqrt{\gamma})^2,(1+\sqrt{\gamma})^2)}}{\pi\sqrt{4\gamma-(x-1-\gamma)^2}} \di x \, , \\\\
	\label{eq:lim_jacobi}
	& \lim_{\alpha \to \infty} \di \wo \nu_J(x) = \frac{2\mathbbm{1}_{(0,1)}}{\pi \sqrt{1-(2x-1)^2}} \di x\, ,
	\end{align}
	where $\mathbbm{1}_{(a,b)}$ is the indicator function of the interval $(a,b)$.
	
	We observe  that for all the three  $\alpha$-ensembles in the large $\alpha$ limit,  the  corresponding mean density of states  is an arcsine distribution.
		It would be interesting to study the behavior of the fluctuations of the  max/min eigenvalue of the  $\alpha$-ensembles  in the limit  of large $\alpha$.

	\section{An application to the Toda chain}
	
	In this section we will apply  Theorem \ref{THM:MAIN} to find the mean density of states of the classical Toda chain \cite{Toda1970} with periodic boundary conditions. As we already mentioned this is an alternative proof of the result in \cite{Spohn2019}.
	
	\subsection{Integrable Structure}
	
	The classical Toda chain is the  dynamical system described by the following Hamiltonian:
	\begin{equation}
	H_{T}(\bp,\bq):=\frac{1}{2} \sum_{j=1}^{N}{p_j^2} + \sum_{j=1}^{N}V_T(q_{j+1}-q_{j} )\ , \quad  V_T(x) =  e^{- x} +  x - 1\, ,
	\label{toda}
	\end{equation}
	
	with periodic boundary conditions  $ \, q_{j+N} = q_{j} \quad \forall \, j \in \mathbb{Z}$. Its equations of motion take the form
	\begin{equation}
	\label{Todaeq}
	\dot{q}_j=\dfrac{\partial H_T}{\partial p_j}=p_j,\quad \dot{p}_j=-\dfrac{\partial H_T}{\partial q_j}=V_T'(q_{j+1}-q_{j}) -V_T'(q_{j}-q_{j-1}), \;\;j=1,\dots,N\, .
	\end{equation}
	
	It is well known that the Toda chain is an integrable system \cite{Toda1970,Henon1974}, one way to prove it is to put the Toda equations  in Lax pair form. This was introduced by Flaschka \cite{Flaschka1974} and  Manakov \cite{Manakov1975} through the following {\em non canonical} change of coordinates:
	
	\begin{equation}
	a_j := -p_j \, , \qquad b_j:=  e^{\frac{1}{2}(q_j-q_{j+1})} \equiv e^{-\frac{1}{2}r_j}, \qquad 1 \leq j \leq N\,,
	\label{bavariable}
	\end{equation}
	where $r_j=q_{j+1}-q_j$ is the relative distance.  The periodic boundary conditions imply	
	\begin{equation}
		\sum_{j=1}^N r_j = 0\,.
	\end{equation}

	Then, defining the Lax operator $L$ as the periodic Jacobi matrix
	\cite{VanMoerbeke1976}
	\begin{equation} \label{eq:L_toda}
	L(\bb,\ba) := \left( \begin{array}{ccccc}
	a_{1} & b_{1} & 0 & \ldots &  b_{N} \\
	b_{1} & a_{2} & b_{2} & \ddots & \vdots \\
	0 & b_{2} & a_{3} & \ddots & 0 \\
	\vdots & \ddots & \ddots & \ddots & b_{N-1} \\
	b_{N} & \ldots & 0 & b_{N-1} & a_{N} \\
	\end{array} \right) \, ,
	\end{equation}
	
	and the anti-symmetric matrix $B$ 
	
	\begin{equation}
	B(\bb) := \left( \begin{array}{ccccc}
	0 & b_{1} & 0 & \ldots &  -b_{N} \\
	-b_{1} & 0 & b_{2} & \ddots & \vdots \\
	0 & -b_{2} & 0 & \ddots & 0 \\
	\vdots & \ddots & \ddots & \ddots & b_{N-1} \\
	b_{N} & \ldots & 0 & -b_{N-1} & 0 \\
	\end{array} \right) \, ,
	\end{equation}
	
	a straightforward calculation shows that the equations of motions \eqref{Todaeq} are equivalent to
	\begin{equation}
	\frac{d L}{dt} = \left[B;L\right]\, ,
	\end{equation}
	
	so the eigenvalues of $L$ are a set of integrals of motion.  
	
	\subsection{Gibbs ensemble and the density of states for the periodic Toda chain}
	
We consider the evolution of the Toda chain on the subspace:
	
	\begin{equation}
	\label{eq:subsapce}
		\cM := \left\{(\bp,\br) \in \R^N \times \R^N \, : \quad \sum_{j=1}^N r_j = \sum_{j=1}^N p_j = 0\right\}\, ,
	\end{equation}  
	which is invariant for the dynamics.  Indeed the condition  $ \sum_{j=1}^N r_j =0$ follows from the periodic boundary  conditions and the  condition  $\sum_{j=1}^N p_j = 0$ follows from the fact that the system 
	is translational invariant and therefore the total momentum is conserved.	
	We endow the phase space $\cM$ \eqref{eq:subsapce} with the Gibbs measure  for the Toda lattice at temperature $\beta^{-1}$  as
	
	\begin{equation}\label{eq:misura_toda}
	\di \nu_{Toda} :=    \frac{1}{Z_{Toda}(\beta)} \  e^{-\beta {H_T(\bp,\br)} }\delta_{\sum_{j=1}^N p_j }\delta_{\sum_{j=1}^N r_j } \di \bp \,  \di \br \, ,
	\end{equation}
	here $Z_{Toda}(\beta)$ is the partition function which normalize the measure. 
	
	We notice that this ensemble makes $L$ \eqref{eq:L_toda} into a random matrix, thus it makes sense to study its mean density of states. However the matrix entries of $L$  are not independent random variables because of  \eqref{eq:subsapce}. 
	For this reason we also introduce the {\em approximate} measure $\di \wt \nu_{Toda}$ on $\R^N\times \R^N$ as
	\begin{equation}
			\di \wt \nu_{Toda} :=    \frac{1}{\wt Z_{Toda}(\beta)} \  e^{-\beta {H_T(\bp,\br) - \theta \sum_j r_j} } \di \bp \,  \di \br \, ,
	\end{equation}
	where $ \wt Z_{Toda}(\beta)$ is the partition function which normalizes the measure and $\theta > 0$ is chosen in such a way that:
	\begin{equation}
		\la r_j, \di \wt \nu_{Toda} \ra = 0\, .
	\end{equation}
	The value of $\theta>0$ is unique for all $\beta >0$ since
	
	\begin{equation}
		\la r_j, \di \wt \nu_{Toda} \ra = \log(\beta) - \frac{\Gamma'(\beta + \theta)}{\Gamma(\beta + \theta)}\, ,
	\end{equation} 
	which has just one positive solution.
	
	From now on we will write $L$ and $\wt L$ as the random matrices whose entries are distributed according to the probability measure $\di \nu_{Toda}$ and $\di \wt \nu_{Toda}$ respectively.  
	 In particular applying the change of coordinates  \eqref{bavariable} one  gets that
	
	\begin{equation}
	\wt L \sim \frac{1}{\sqrt{2\beta}}\left( \begin{array}{ccccc}
	a_{1} & b_{1} & 0 & \ldots &  b_{N} \\
	b_{1} & a_{2} & b_{2} & \ddots & \vdots \\
	0 & b_{2} & a_{3} & \ddots & 0 \\
	\vdots & \ddots & \ddots & \ddots & b_{N-1} \\
	b_{N} & \ldots & 0 & b_{N-1} & a_{N} \\
	\end{array} \right), \quad b_j \sim \chi_{2(\beta + \theta)}\, , a_j \sim \cN(0,2)\quad j=1,\ldots,N.
	\end{equation}

	To obtain the mean density of states of the Toda lattice with periodic boundary conditions we need the following lemma, whose proof can be found in \cite{Grava2020}:
	\begin{lemma}(cf. \cite[Lemma 4.1]{Grava2020})
	\label{magic}
	Fix $\wt\beta >0$  and let $f \colon \R^N\times \R^N \to \R$ depends on just $K$ variables and  finite second order moment with respect to $\di \wt\nu_{Toda}$, uniformly for all $\beta > \wt\beta$.
	Then there exist   positive constants $C, N_0$ and $\beta_0 $ such that
	for all $N > N_0$, $\beta > max\{\beta_0,\tilde{\beta}\}$ one has  
	\begin{equation}
	\label{eq:magic}
	\abs{\la f, \di\nu_{Toda} \ra - \la f,\di\wt\nu_{Toda} \ra }\leq  C \frac{K}{N} \sqrt{\la f^2,\di\wt\nu_{Toda}\ra - \la f,\di\wt\nu_{Toda} \ra^2}\, .
	\end{equation}
	\end{lemma}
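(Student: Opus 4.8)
The plan is to realize $\di\nu_{Toda}$ as the conditional law of the product measure $\di\wt\nu_{Toda}$ given the two linear constraints, and then to quantify the cost of conditioning a few-variable observable by a local central limit theorem. Under $\di\wt\nu_{Toda}$ the coordinates $(p_j,r_j)_{j=1}^N$ are independent, the $p_j$ being centered Gaussians and the $r_j$ having the tilted Toda weight, which is centered precisely by the choice of $\theta$. On the affine set $\{\sum_j r_j=0\}$ the factor $e^{-\theta\sum_j r_j}$ is constant and cancels in the normalization, so $\di\nu_{Toda}$ is exactly $\di\wt\nu_{Toda}$ conditioned on $S:=(\sum_j p_j,\sum_j r_j)=0$. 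Writing $w$ for the contribution to $S$ of the (at most) $K$ coordinates on which $f$ depends, $g_{N-K}$ for the density of the contribution of the complementary coordinates (a product of a $p$-density and an $r$-density, by the independence of the two families), and $\meanval{\cdot}$ for expectation over the $K$ coordinates under the product law, disintegration gives
\begin{equation}
\la f,\di\nu_{Toda}\ra=\frac{\meanval{f\,g_{N-K}(-w)}}{g_N(0)},\qquad \la f,\di\wt\nu_{Toda}\ra=\meanval{f}.
\end{equation}

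Next I would set $D:=g_{N-K}(-w)/g_N(0)-1$. Integrating $g_{N-K}(-w)$ against the law of the $K$ coordinates reproduces $g_N(0)$ by convolution, so $\meanval{D}=0$. Hence
\begin{equation}
\la f,\di\nu_{Toda}\ra-\la f,\di\wt\nu_{Toda}\ra=\meanval{fD}=\meanval{(f-\meanval{f})\,D},
\end{equation}
and Cauchy--Schwarz gives $\bigl|\la f,\di\nu_{Toda}\ra-\la f,\di\wt\nu_{Toda}\ra\bigr|\le\sqrt{\variance(f)}\,\sqrt{\meanval{D^2}}$, the variance taken under $\di\wt\nu_{Toda}$. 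Comparing with \eqref{eq:magic}, the whole lemma reduces to the single estimate $\meanval{D^2}\le C^2K^2/N^2$.

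For that estimate I would use $\meanval{D}=0$ to write $\meanval{D^2}=\meanval{g_{N-K}(-w)^2}/g_N(0)^2-1$, and then invoke a local central limit theorem. The single-site laws are smooth with all moments and non-degenerate covariance $\Sigma$, so $g_m(x)=\phi_{m\Sigma}(x)(1+o(1))$ uniformly, where $\phi_{m\Sigma}$ is the centered Gaussian density with covariance $m\Sigma$. Replacing $g$ by its Gaussian approximation turns the right-hand side into explicit Gaussian convolutions; in the representative case these evaluate to
\begin{equation}
\frac{\meanval{g_{N-K}(-w)^2}}{g_N(0)^2}=\frac{N^2}{N^2-K^2}\,(1+o(1)),
\end{equation}
so that $\meanval{D^2}=K^2/N^2+o(K^2/N^2)$ (the precise constant depends on how the $K$ coordinates split between the two sums, but the order $K^2/N^2$ is robust); for $N>N_0$ with $K\le N/2$ this is bounded by $C^2K^2/N^2$, closing the argument.

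The two reduction steps are essentially algebraic; the real work is the local central limit estimate. The difficulty is to promote the heuristic Gaussian computation into a rigorous and \emph{uniform} bound. First, one needs the local CLT with a controlled error valid pointwise in the argument $-w$, including the large-deviation regime where $w$ is atypically large and the ratio $g_{N-K}(-w)/g_N(0)$ is far from one; this is affordable because $D\ge-1$ and the product-measure probability of a large $w$ decays fast enough that its contribution to $\meanval{D^2}$ is negligible beside $K^2/N^2$. Second, the bound must hold uniformly for $\beta>\beta_0$, which forces one to track the standardized moments of the single-site laws, in particular of the tilted $r$-variable whose distribution degenerates as $\beta\to\infty$; taking $\beta_0$ large enough keeps the Edgeworth corrections uniformly small and fixes the constants $C,N_0,\beta_0$. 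This uniform local CLT is the main obstacle.
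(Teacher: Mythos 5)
The paper itself never proves this lemma --- it is quoted with proof deferred to Lemma 4.1 of \cite{Grava2020} --- so your proposal can only be judged against that cited argument and on its own merits. Its first half holds up well: identifying $\di \nu_{Toda}$ with $\di \wt\nu_{Toda}$ conditioned on $S=(\sum_j p_j,\sum_j r_j)=(0,0)$ is correct (the tilt $e^{-\theta\sum_j r_j}$ is constant on the constraint set, so it cancels), and the chain ``disintegration, then $\meanval{D}=0$ by the convolution identity, then Cauchy--Schwarz'' is a clean and correct reduction of the lemma to the single estimate $\meanval{D^2}\le C^2K^2/N^2$. Your Gaussian computation of the ratio, $N^2/(N^2-K^2)$ for the two-dimensional constraint, is also the right heuristic.

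The genuine gap is in the one step carrying all the analytic weight, and as sketched it would fail. From a local CLT of the form you invoke, $g_m=\phi_{m\Sigma}\,(1+o(1))$ uniformly, one can only conclude $\meanval{D^2}=O(K^2/N^2)+o(1)$, not $K^2/N^2+o(K^2/N^2)$: the uniform relative error multiplies a quantity of order one, not of order $K^2/N^2$, and for sums of i.i.d.\ variables with a skewed single-site law (the tilted $r$-marginal is skewed) that error is generically of size $m^{-1/2}$, which swamps the target $K^2/N^2$. This route therefore yields at best a bound of order $N^{-1/4}\sqrt{\variance{f}}$, far weaker than $C(K/N)\sqrt{\variance{f}}$. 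The repair must use the identity $\meanval{D}=0$ a second time: since $\meanval{D^2}=\variance{h(w)}$ with $h(w)=g_{N-K}(-w)/g_N(0)$, any $w$-independent error in the local CLT cancels, and what one actually needs is control of the $w$-dependence of $g_{N-K}$, for instance
\begin{equation}
\abs{\nabla g_{N-K}(0)}\leq C\,\frac{g_N(0)}{N}\,,\qquad \sup_x \abs{\nabla^2 g_{N-K}(x)}\leq C\,\frac{g_N(0)}{N}\,,
\end{equation}
after which a Taylor expansion gives $\variance{h(w)}\leq C'\left(\meanval{|w|^2}/N^2+\meanval{|w|^4}/N^2\right)=O(K^2/N^2)$. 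The first bound is \emph{not} generic --- it encodes exactly the centering of the single-site laws, i.e.\ the choice of $\theta$, through the cancellation of the odd Edgeworth term at the origin --- and both require an Edgeworth expansion with derivative-level (relative-error) remainders, uniformly in $\beta$; equivalently one can run the whole computation on characteristic functions, which is the route of the cited proof. That refined estimate is the real content of the lemma. The obstacles you do flag (large-deviation regime, $\beta$-uniformity) are present but secondary --- indeed $|D|$ is bounded outright once $\|g_{N-K}\|_\infty/g_N(0)=O(1)$ --- whereas the decisive quantitative point above is absent from the proposal.
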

	
	Applying this Lemma we can conclude that the matrices  $L$ and $\wt L$  have the same moment sequence in the large $N$ limit.
	Furthermore, $\wt L$ is a rank one perturbation of the matrix $\frac{1}{\sqrt{\beta}}H_{\theta+\beta}$ in table \ref{tab:alpha_ensemble}.
	  So we can use the following theorem, whose proof can be found in  \cite{Bai2010}, to { show}  that the mean density of states of the  matrices $\wt L$ and  $\frac{1}{\sqrt{\beta}}H_{\theta+\beta}$  in the large $N$ limit are the same.
	\begin{theorem}(cf. \cite[Theorem A.43]{Bai2010})
		Let $A,B$ be two $N\times N$ Hermitian matrices and $F^A,F^B$ their empirical spectral density defined as:
		
		\begin{equation}
		F^A(x) := \frac{1}{N}\#\{j\leq N \, : \, \lambda_j \leq x \}\,,
		\end{equation}
		where $\lambda_j$ are the eigenvalues of $A$. Then
		
		\begin{equation}
		|| F^A -F^B|| \leq \frac{1}{N}\text{ Rank}(A-B)\, ,
		\end{equation} 
		where $||f|| = \sup_x |f(x)|$.
	\end{theorem}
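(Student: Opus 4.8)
The plan is to reduce everything to a pointwise integer estimate on the eigenvalue counting functions. Set $r := \operatorname{Rank}(A-B)$ and, for a Hermitian matrix $M$, write $n^M(x) := N F^M(x) = \#\{j : \lambda_j(M) \le x\}$ for the number of eigenvalues of $M$ (with multiplicity) not exceeding $x$. Since $\| F^A - F^B\| = \frac{1}{N}\sup_{x}\abs{n^A(x) - n^B(x)}$, the theorem is equivalent to the claim that $\abs{n^A(x) - n^B(x)} \le r$ for every fixed $x \in \R$. By the symmetry of the roles of $A$ and $B$, it is enough to prove the one-sided bound $n^B(x) \le n^A(x) + r$; interchanging $A$ and $B$ then gives the reverse inequality, and the two together yield the absolute-value bound.

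To prove $n^B(x) \le n^A(x) + r$, fix $x$ and introduce two subspaces of $\mathbb{C}^N$. Let $V_B$ be the span of the eigenvectors of $B$ with eigenvalue $\le x$, so that $\dim V_B = n^B(x)$ and $\langle v, B v\rangle \le x\|v\|^2$ for all $v \in V_B$. Let $W := \ker(A-B)$; because $\operatorname{Rank}(A-B) = r$ we have $\dim W = N - r$, and for $w \in W$ the two quadratic forms coincide, $\langle w, A w\rangle = \langle w, B w\rangle$. The key move is to intersect these two subspaces, using the dimension formula for subspaces of an $N$-dimensional space:
\[ \dim(V_B \cap W) \ge \dim V_B + \dim W - N = n^B(x) - r . \]
On the intersection $U := V_B \cap W$ both properties hold at once, so every $u \in U$ satisfies $\langle u, A u\rangle = \langle u, B u\rangle \le x\|u\|^2$.

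It then remains to convert the existence of such a subspace into a lower bound on $n^A(x)$, which is precisely the content of the Courant--Fischer min-max principle. Suppose, for contradiction, that $n^A(x) < \dim U$. Let $V_{>x}$ be the span of the eigenvectors of $A$ with eigenvalue $> x$; then $\dim V_{>x} = N - n^A(x) > N - \dim U$, so the dimension formula again forces $U \cap V_{>x} \ne \{0\}$. A nonzero vector $u$ in this intersection would satisfy $\langle u, A u\rangle \le x\|u\|^2$ (because $u \in U$) and simultaneously $\langle u, A u\rangle > x\|u\|^2$ (because $u \in V_{>x}$), a contradiction. Hence $n^A(x) \ge \dim U \ge n^B(x) - r$, which is the desired one-sided bound.

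I expect the only genuinely delicate point to be this final conversion step: it hinges on using the \emph{strict} inequality $\langle u, A u\rangle > x\|u\|^2$ on $V_{>x}$ (valid because those eigenvalues are strictly larger than $x$) to close the contradiction, and on keeping the eigenvalue-ordering conventions consistent so that the two dimension counts combine correctly. No analytic estimates enter the argument; once the inequality $\abs{n^A(x) - n^B(x)} \le r$ is established for all $x$, dividing by $N$ and taking the supremum yields $\| F^A - F^B\| \le \frac{1}{N}\operatorname{Rank}(A-B)$.
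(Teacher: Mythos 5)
Your proof is correct. Note, however, that the paper itself offers no proof of this statement: it is quoted verbatim from Bai--Silverstein (Theorem A.43 of \cite{Bai2010}), so there is no internal argument to compare against, and your write-up genuinely supplies what the paper delegates to a citation. Your route --- reducing $\lVert F^A-F^B\rVert \le \frac{1}{N}\operatorname{Rank}(A-B)$ to the pointwise counting bound $\lvert n^A(x)-n^B(x)\rvert \le r$, then proving the one-sided inequality by intersecting the spectral subspace $V_B$ of $B$ below level $x$ with $\ker(A-B)$ and closing via a Courant--Fischer contradiction on the subspace spanned by eigenvectors of $A$ above level $x$ --- is the standard linear-algebraic proof of the rank inequality, and it is the same mechanism that underlies the argument in the cited reference, where the result is obtained from the interlacing bounds $\lambda_{j+r}(A)\le\lambda_j(B)\le\lambda_{j-r}(A)$ for a rank-$r$ Hermitian perturbation (those interlacing bounds are themselves proved by exactly the dimension-counting you perform). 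All the individual steps check out: the identity $\dim\ker(A-B)=N-r$ is rank--nullity, the estimate $\dim(V_B\cap W)\ge n^B(x)-r$ is the correct subspace dimension formula, and your use of the \emph{strict} inequality $\langle u,Au\rangle > x\lVert u\rVert^2$ on the span of eigenvectors with eigenvalue $>x$ is precisely what makes the contradiction airtight. What your version buys is a short, self-contained, purely finite-dimensional proof that could replace the external citation entirely.
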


	This implies also that the moment sequence of $\wt L$ and $  \frac{1}{\sqrt{\beta}}H_{\theta+\beta}$ are the same in the large $N$ limit, which means that also the moment sequence of $L, \frac{1}{\sqrt{\beta}}H_{\theta+\beta}$ in the large $N$ limit are equal. 
	So applying Lemma \ref{lem:bay} and Theorem \ref{THM:MAIN} one gets that :
	
	\begin{lemma}
		Consider the classical Toda chain \eqref{toda} and endow the phase space $\cM$ \eqref{eq:subsapce} with the Gibbs measure $\di \nu_{Toda}$ in \eqref{eq:misura_toda}, then there exists a  constant $\beta_0 > 0$ such that, for all $\beta > \beta_0$ the mean density of states of the Lax matrix $L$ \eqref{eq:L_toda} {in the limit $N\to\infty$} is explicitly given by:

		\begin{equation}
		\label{eq:mean_toda}
		\wo{\di \xi_L}(x) =\sqrt{\beta} \partial_{\alpha}(\alpha\mu_{\alpha}(\sqrt{\beta}x))_{\rvert_{\alpha = \beta + \theta}} \di x \, ,
		\end{equation}
		\noindent
		where $\mu_\alpha(x)$ is given in \eqref{eq:spectralGaussian}.
	\end{lemma}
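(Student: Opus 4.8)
The plan is to realize \eqref{eq:mean_toda} as the end of a short chain of limiting moment identities, and then to invoke moment--determinacy together with Theorem~\ref{THM:MAIN}. Concretely, I would show that the three matrices $L$, $\wt L$ and $\frac{1}{\sqrt{\beta}}H_{\theta+\beta}$ all share the same limiting moment sequence $\lim_{N\to\infty}\frac{1}{N}\meanval{\mathit{Tr}(\,\cdot\,^l)}$ for every fixed $l$, and that this sequence is realized by, and uniquely determines, the measure with density $\sqrt\beta\,\partial_\alpha(\alpha\mu_\alpha(\sqrt\beta x))|_{\alpha=\beta+\theta}$. The final identification uses that $\frac{1}{\sqrt\beta}H_{\theta+\beta}$ is, up to the $\frac{1}{\sqrt2}$ normalization, the (non-periodic) Gaussian $\alpha$-ensemble at $\alpha=\beta+\theta$ rescaled by $\frac{1}{\sqrt\beta}$; hence its mean density of states is the affine push--forward of $\wo{\di\nu_H}$ under $x\mapsto x/\sqrt\beta$, whose density is exactly the right--hand side of \eqref{eq:mean_toda} by the change--of--variables factor $\sqrt\beta$.

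First I would pass from the constrained Gibbs measure $\di\nu_{Toda}$ to the product measure $\di\wt\nu_{Toda}$. For fixed $l<N/2$ one has $\frac{1}{N}\mathit{Tr}(L^l)=\frac{1}{N}\sum_{j=1}^N h_j^{(l)}$, and by Theorem~\ref{thm:structure_moments} each $h_j^{(l)}$ is a polynomial in the entries $a_{j+i},b_{j+i}$ depending on only $K=O(l)$ of the variables $(\bp,\br)$; the periodic corner $b_N$ never enters for $N$ large, since a closed walk of fixed length $l$ cannot wind around the cycle. Applying Lemma~\ref{magic} termwise through \eqref{eq:magic} gives $\abs{\la h_j^{(l)},\di\nu_{Toda}\ra-\la h_j^{(l)},\di\wt\nu_{Toda}\ra}\le C\frac{K}{N}$, where the uniform bound uses that for $\beta>\tilde\beta$ the entries are $\chi_{2(\beta+\theta)}$-- and $\cN(0,2)$--distributed with moments bounded uniformly in $N$ and $j$, so the variance factor in \eqref{eq:magic} is uniformly bounded. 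Summing over $j$ and dividing by $N$, the discrepancy between $\frac{1}{N}\meanval{\mathit{Tr}(L^l)}_{Toda}$ and $\frac{1}{N}\meanval{\mathit{Tr}(\wt L^l)}$ is $O(K/N)\to0$. This is the step that fixes the threshold $\beta_0$ and exploits the choice of $\theta$ making $\la r_j,\di\wt\nu_{Toda}\ra=0$ (so that $\wt L$ has the stated $\chi_{2(\beta+\theta)}$ off--diagonal law), and I expect it to be the main obstacle: one must verify the hypotheses of Lemma~\ref{magic}, in particular the uniform second--moment bound, for the full polynomial $h_j^{(l)}$ rather than for a single monomial.

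Next I would remove the periodic corner. The matrix $\wt L$ equals $\frac{1}{\sqrt\beta}H_{\theta+\beta}$ modified only in the $(1,N)$ and $(N,1)$ entries, hence is a perturbation of bounded rank, independent of $N$; by \cite[Theorem~A.43]{Bai2010} the two empirical spectral distribution functions differ by $O(1/N)$ in sup norm, and together with the uniform moment bounds (equivalently, since the corner edge contributes only $O(l/N)$ to $\frac{1}{N}\mathit{Tr}$) their limiting moment sequences coincide. Combining the two reductions, $\lim_N\frac{1}{N}\meanval{\mathit{Tr}(L^l)}_{Toda}=\lim_N\frac{1}{N}\meanval{\mathit{Tr}((\frac{1}{\sqrt\beta}H_{\theta+\beta})^l)}$ for every $l$. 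By Theorem~\ref{THM:MAIN} the mean density of states of $H_{\theta+\beta}$ is $\partial_\alpha(\alpha\mu_\alpha(x))|_{\alpha=\theta+\beta}\di x$, and by Theorem~\ref{thm:ht_regime} its moments satisfy \eqref{eq:divergence_moments}; the affine push--forward $x\mapsto x/\sqrt\beta$ preserves this growth bound, so the rescaled measure with density $\sqrt\beta\,\partial_\alpha(\alpha\mu_\alpha(\sqrt\beta x))|_{\alpha=\theta+\beta}$ is determined by its moments via Lemma~\ref{lem:bay}. Since the limiting moment sequence of $L$ agrees with that of this measure, Lemma~\ref{lem:bay} forces $\wo{\di\xi_L}$ to equal it, which is precisely \eqref{eq:mean_toda}.
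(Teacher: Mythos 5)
Your proposal follows essentially the same route as the paper: Lemma~\ref{magic} to trade the constrained Gibbs measure $\di\nu_{Toda}$ for the product measure $\di\wt\nu_{Toda}$, the rank inequality \cite[Theorem A.43]{Bai2010} to remove the periodic corner and identify $\wt L$ with the rescaled Gaussian $\alpha$-ensemble $\frac{1}{\sqrt{\beta}}H_{\theta+\beta}$, and finally Theorem~\ref{THM:MAIN} together with moment determinacy (Lemma~\ref{lem:bay}) to pin down the limiting density as $\sqrt{\beta}\,\partial_\alpha(\alpha\mu_\alpha(\sqrt{\beta}x))|_{\alpha=\beta+\theta}$. The only differences are cosmetic --- you make explicit the termwise application of Lemma~\ref{magic} to the polynomials $h_j^{(l)}$ and observe that the corner perturbation has bounded rank (in fact at most two, where the paper loosely says rank one) --- so this is the paper's proof.
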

	To conclude, we  also remark that if we let the inverse temperature $\beta$   approach infinity, in view of \eqref{eq:lim_gaussian},  we obtain  that the mean density of states of the classical Toda chain in this regime is exactly the arcsine law \eqref{eq:mean_gaussian}.
	 {From the physical point of view the system is at rest.}    
	
	{ \bf Acknowledgments.}
	This project has  received funding from the European Union's H2020 research and innovation program under the Marie Sk\l owdoska--Curie grant No. 778010 {\em  IPaDEGAN}  and from LIA, LYSM, AMU, CNRS, ECM, INdAM. \\

	\begin{figure}
		
		\includegraphics[scale=0.5]{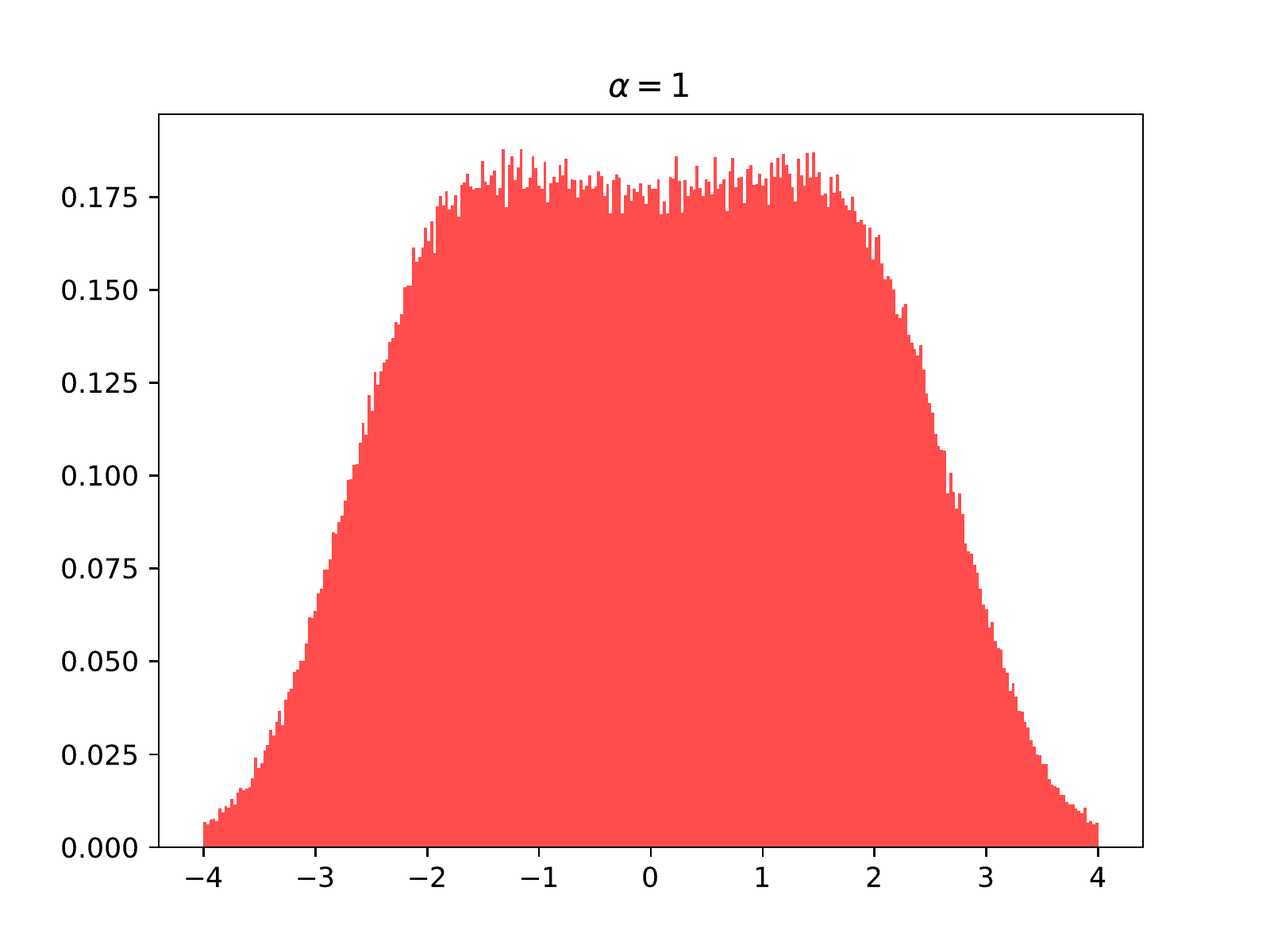}
		\includegraphics[scale=0.5]{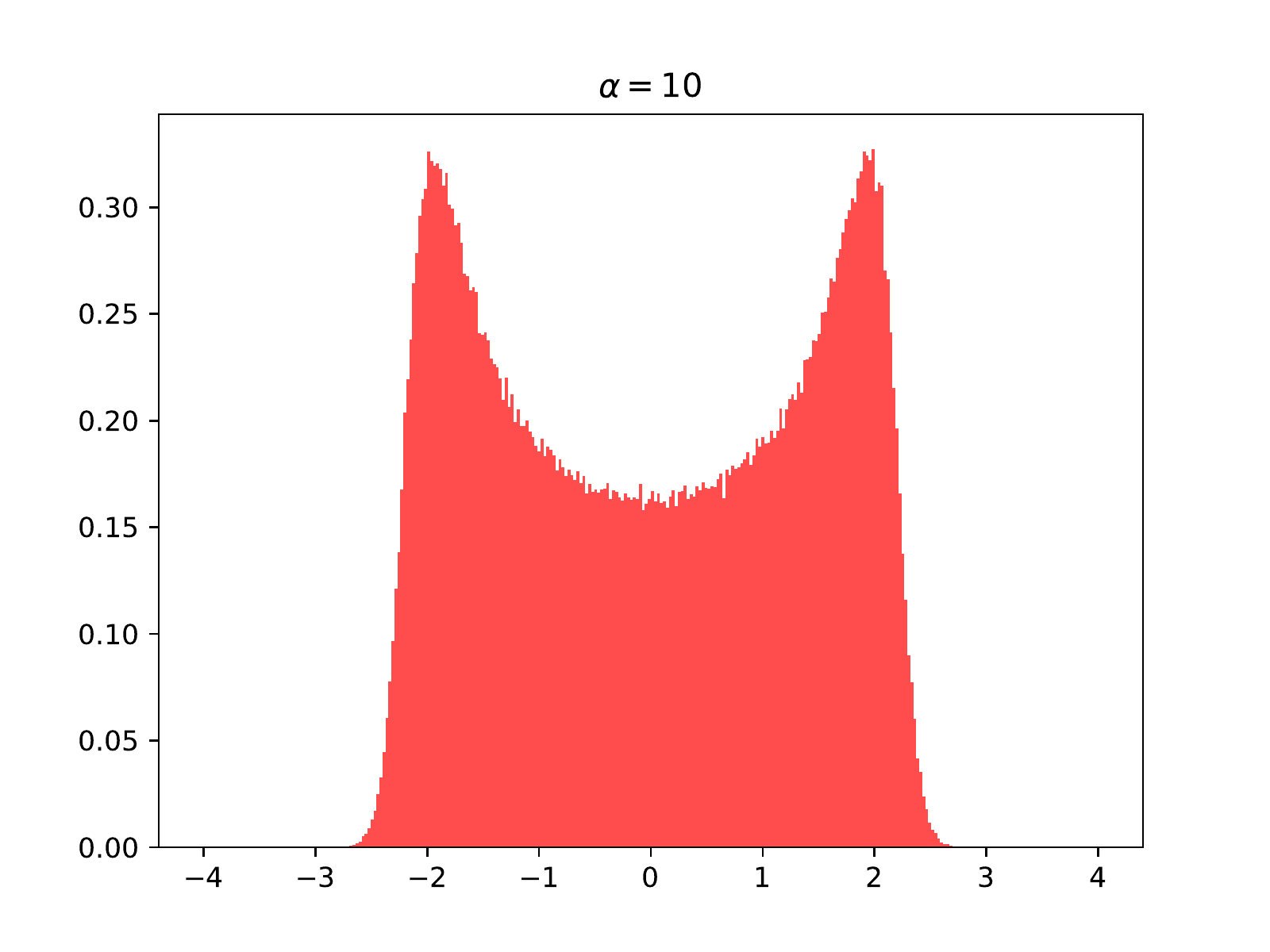}
		\includegraphics[scale=0.5]{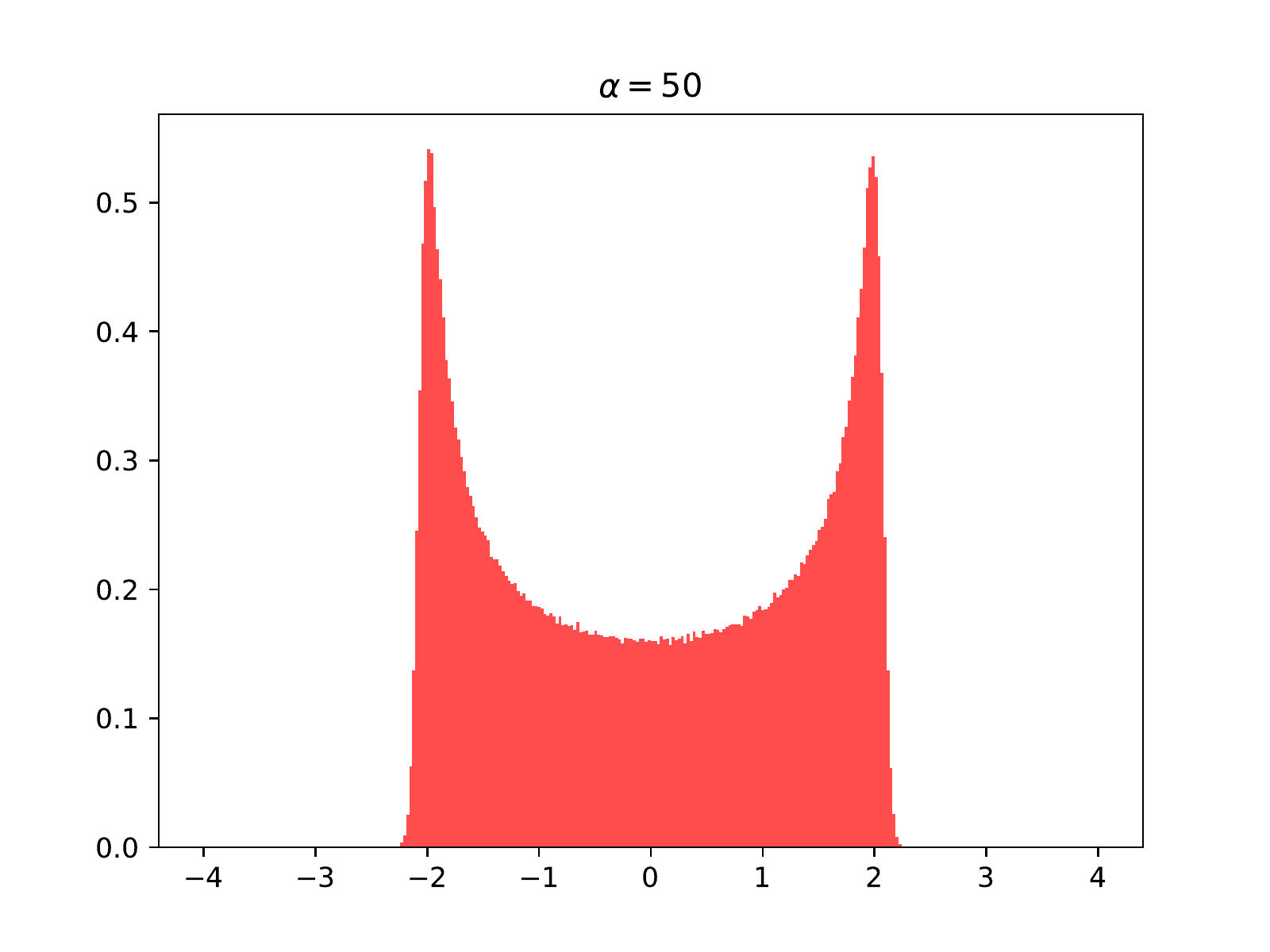}
		\includegraphics[scale=0.5]{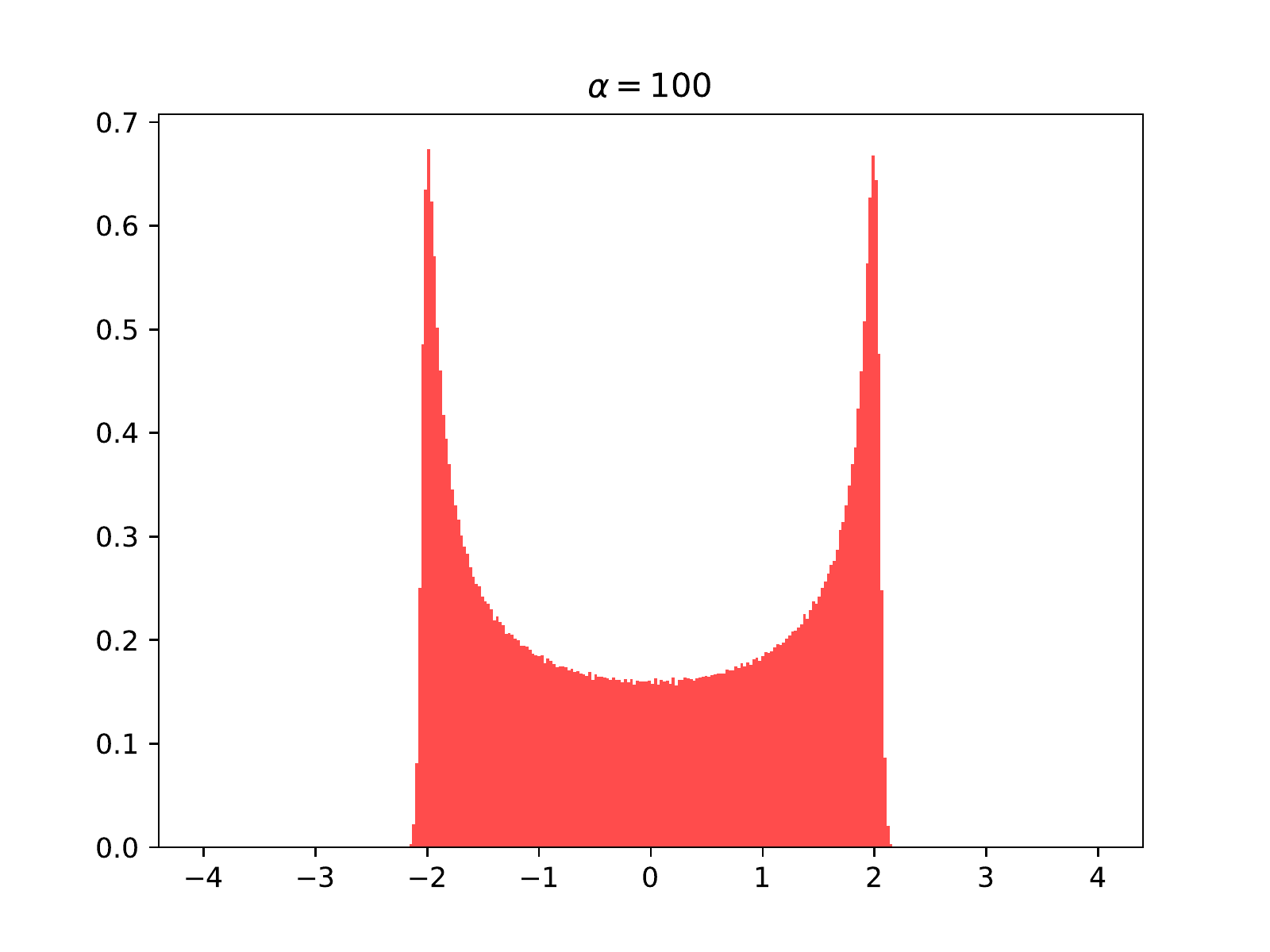}
		\caption{Gaussian $\alpha$ ensemble empirical spectral density for different values of the parameters, $N= 500$, trials: $5000$.}
		\label{fig:gaussian}
	\end{figure}
	\begin{figure}
		
		\includegraphics[scale=0.5]{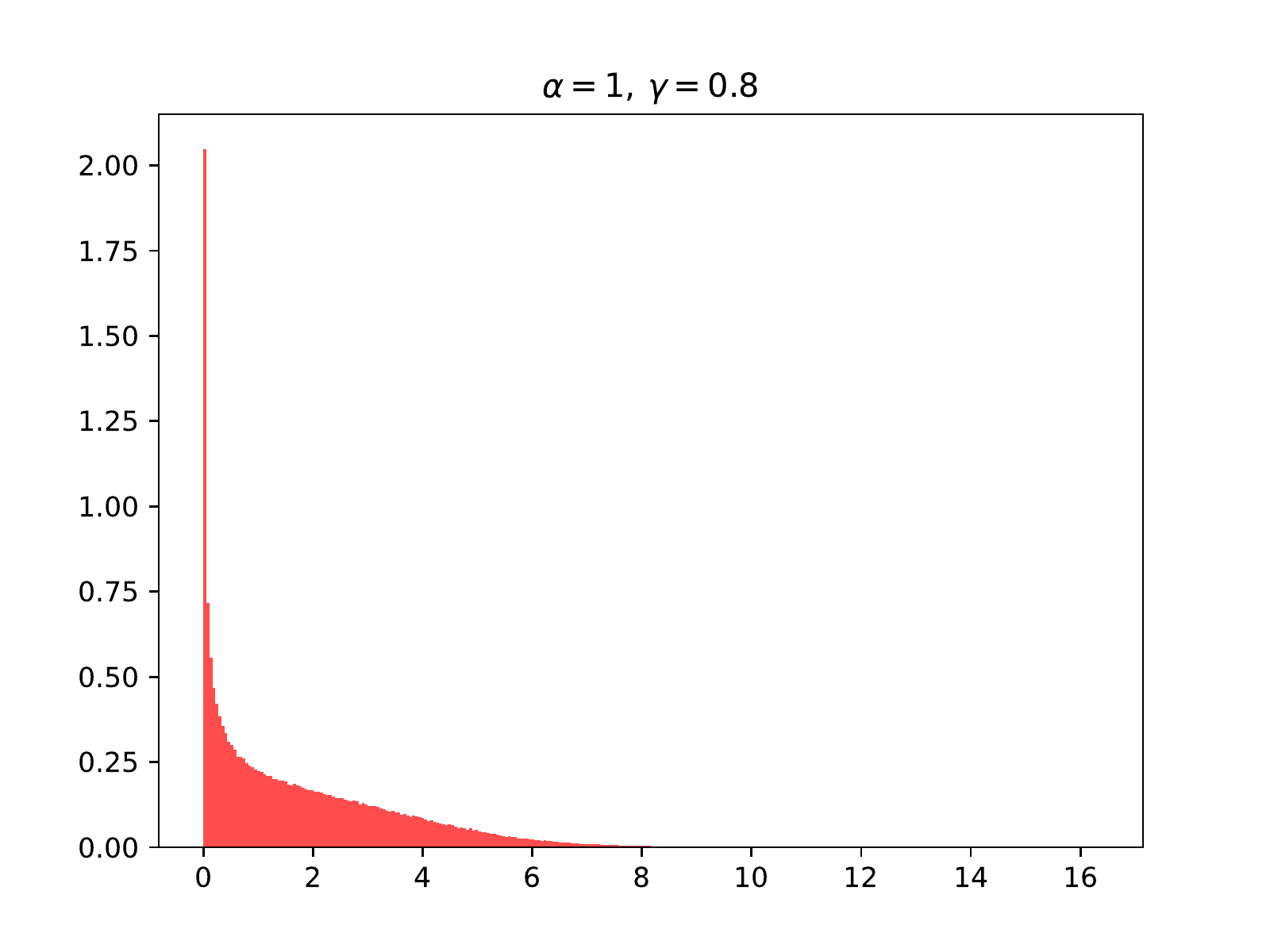}
		\includegraphics[scale=0.5]{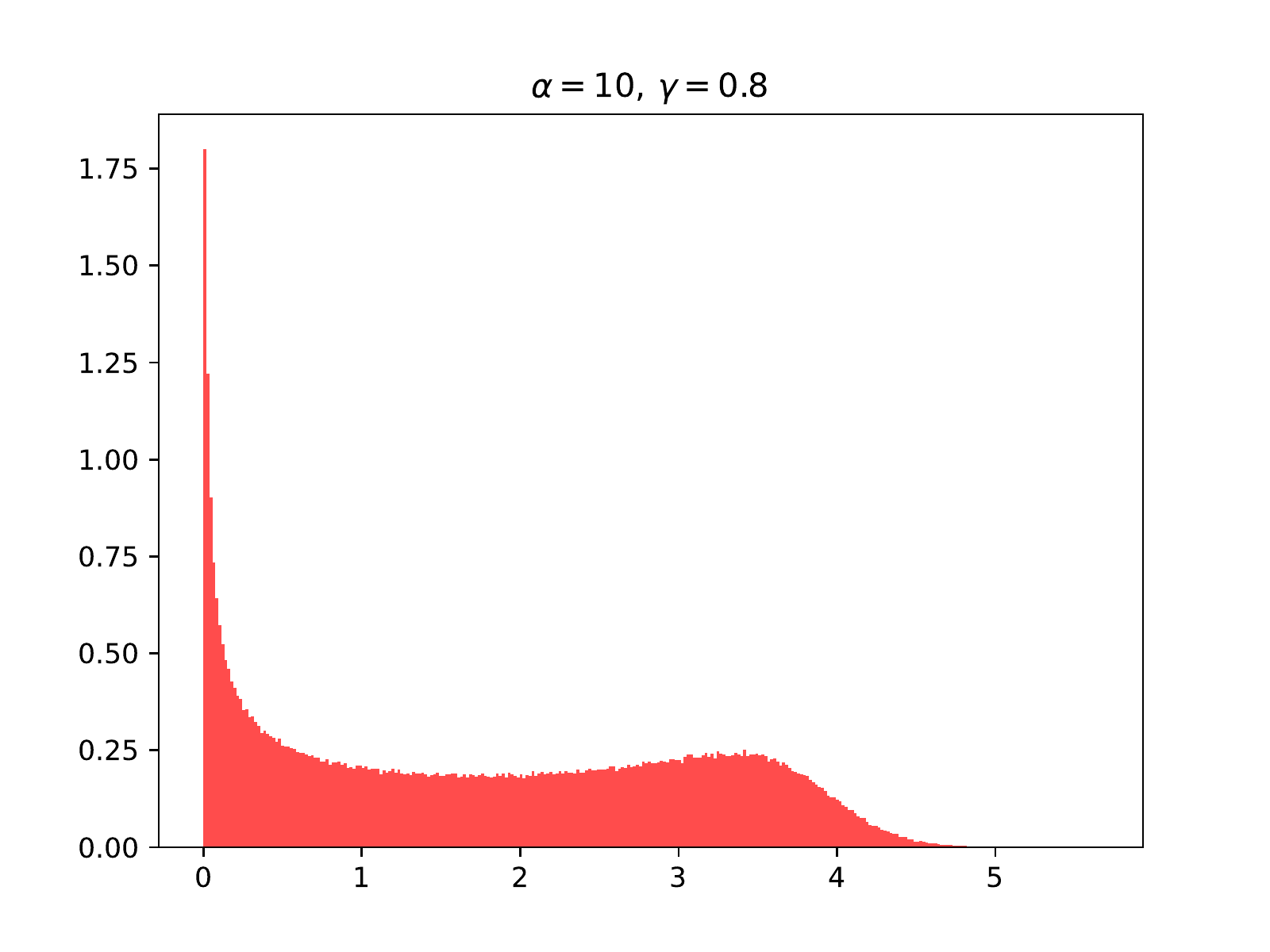}
		\includegraphics[scale=0.5]{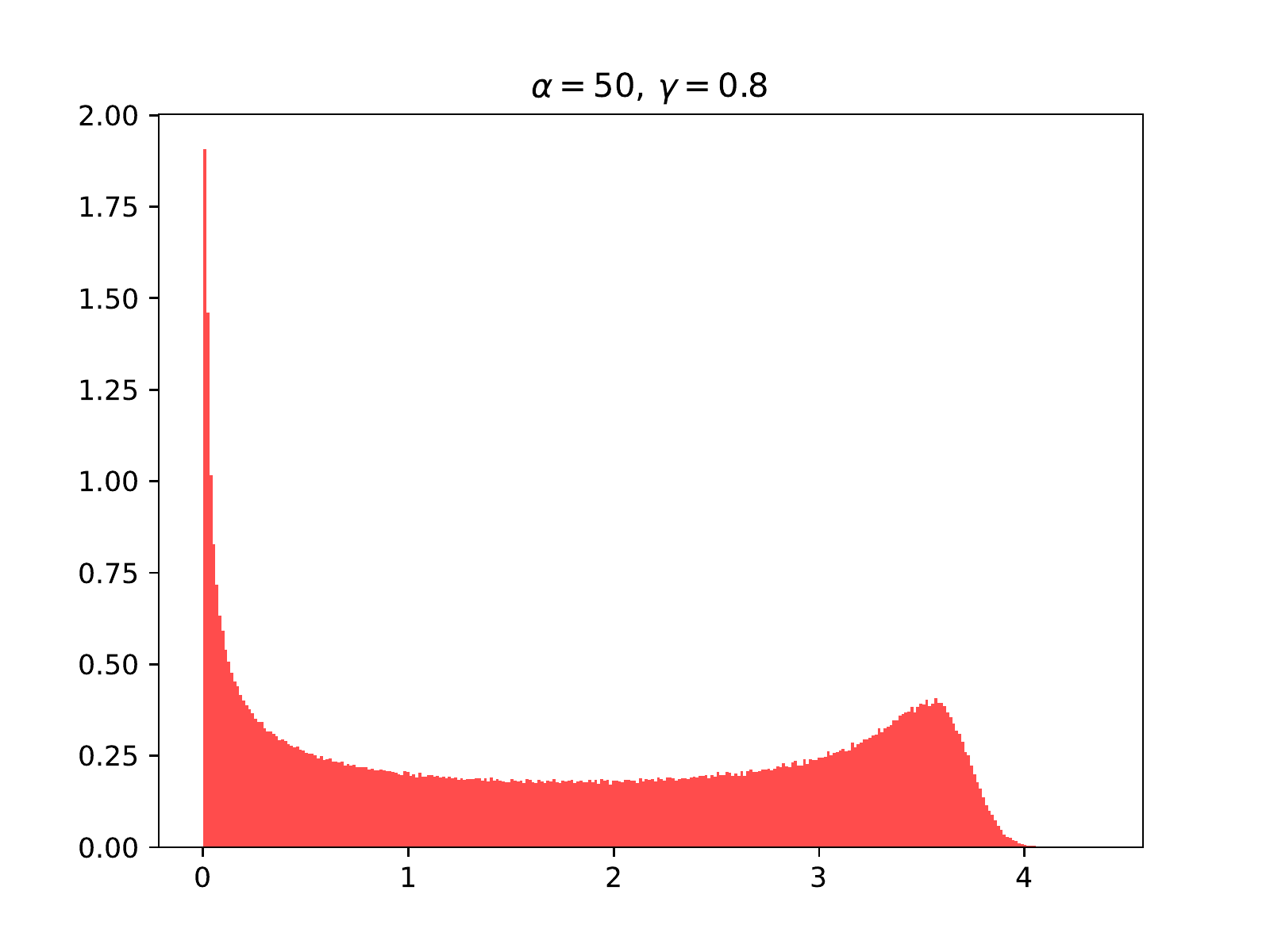}
		\includegraphics[scale=0.5]{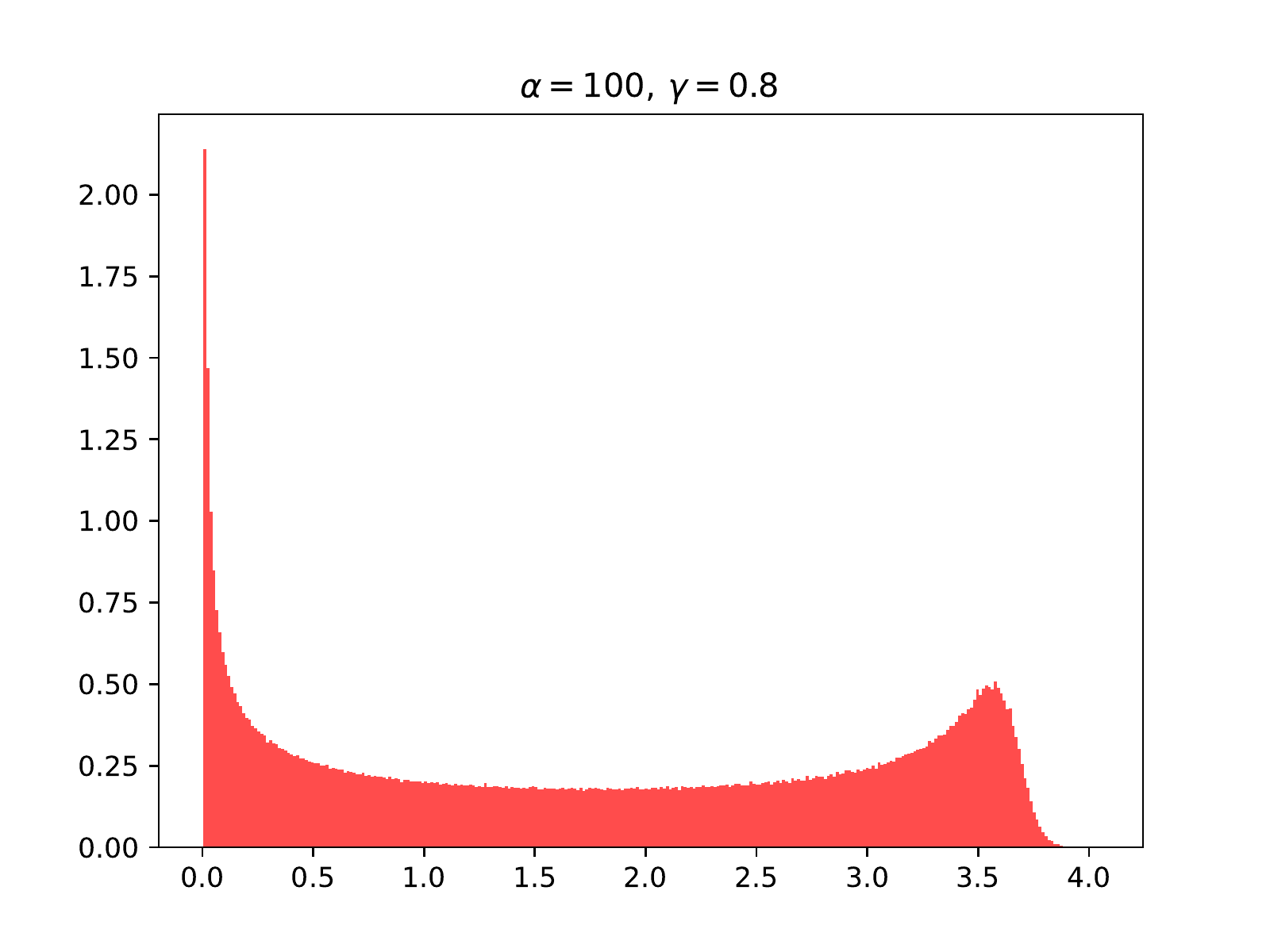}
		\caption{Laguerre $\alpha$ ensemble empirical spectral density for different values of the parameters, $N= 500$, trials: $5000$.}
		\label{fig:laguerre}
	\end{figure}
	\begin{figure}
		\includegraphics[scale=0.5]{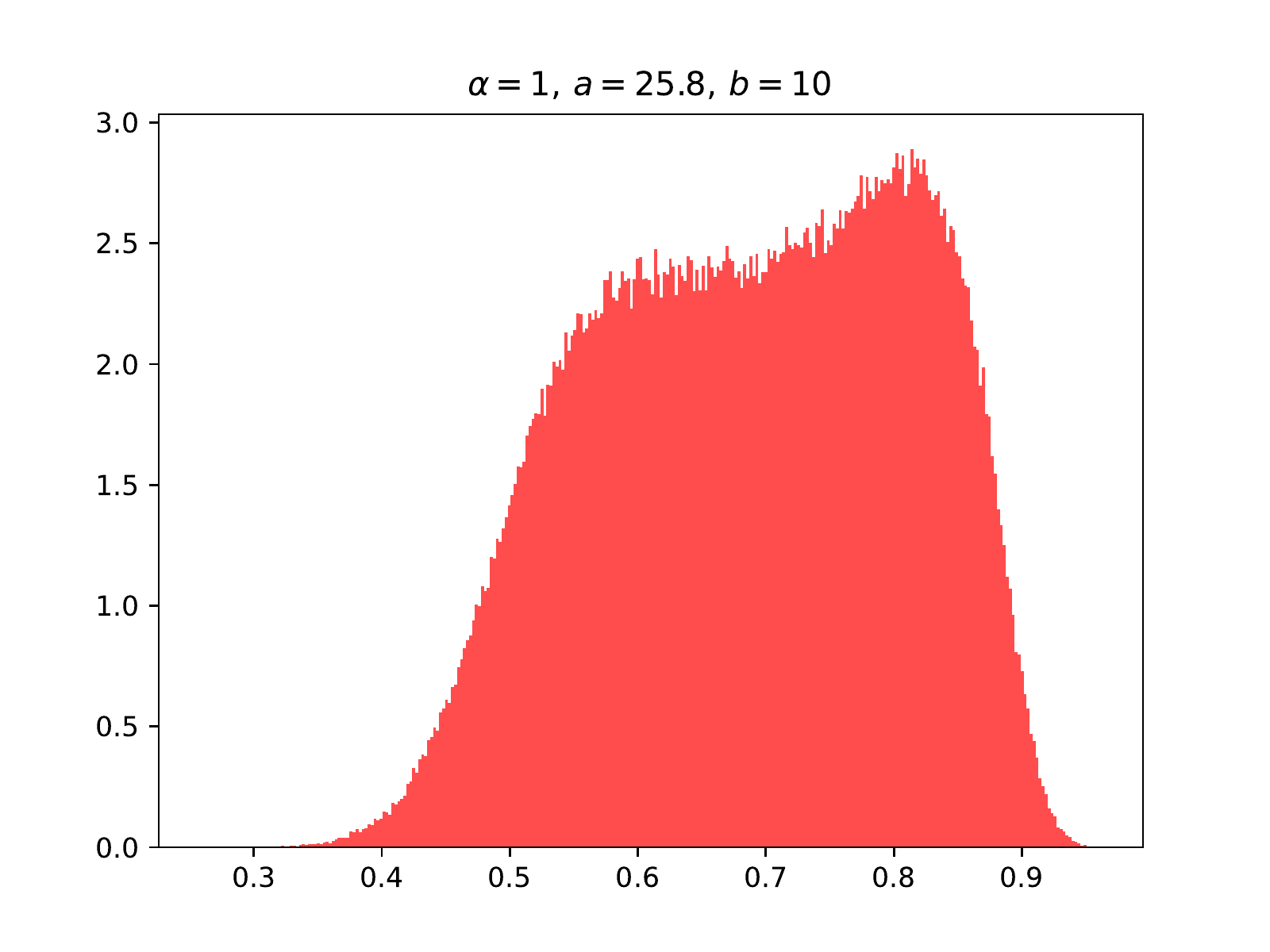}
		\includegraphics[scale=0.5]{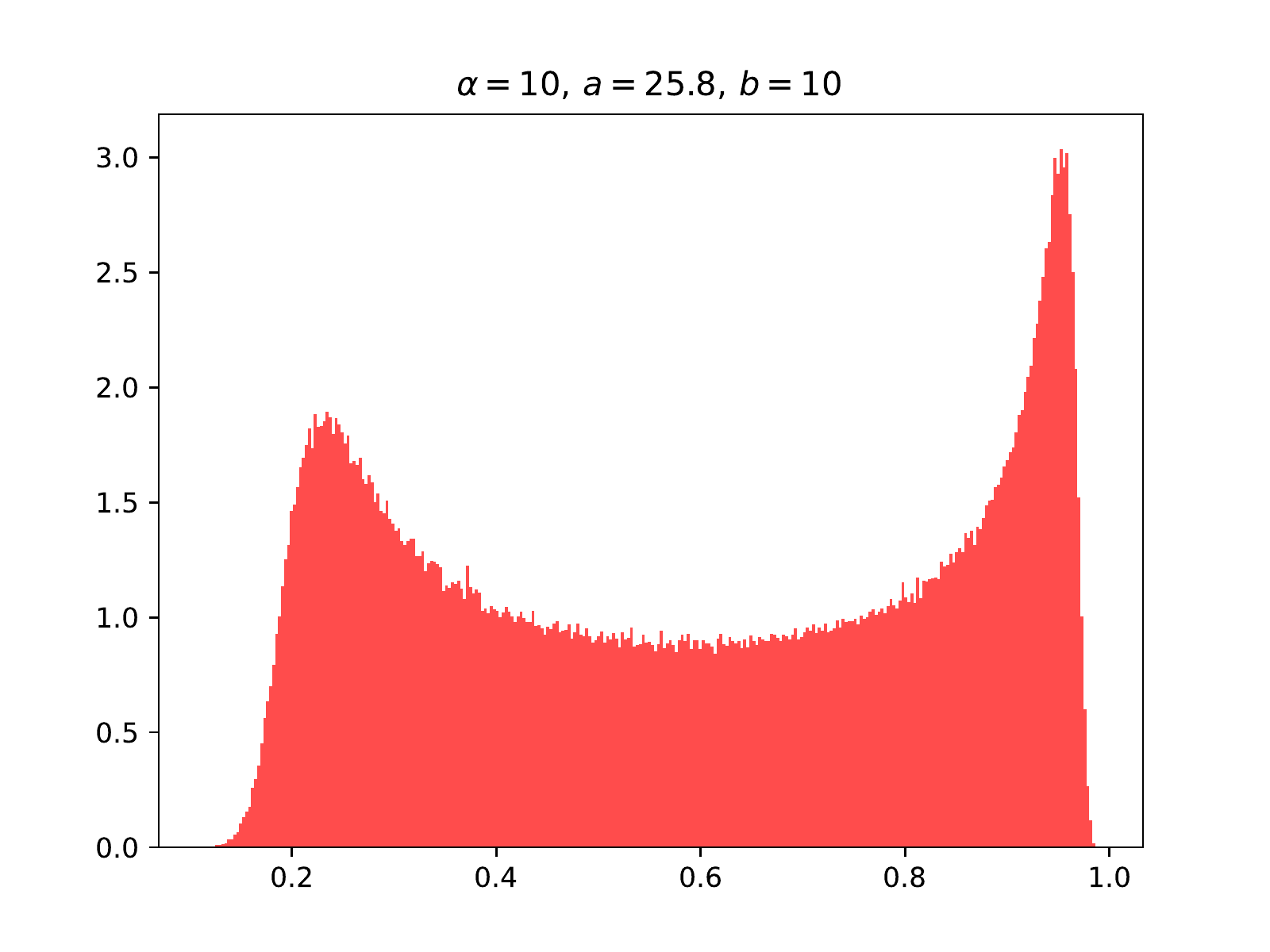}
		\includegraphics[scale=0.5]{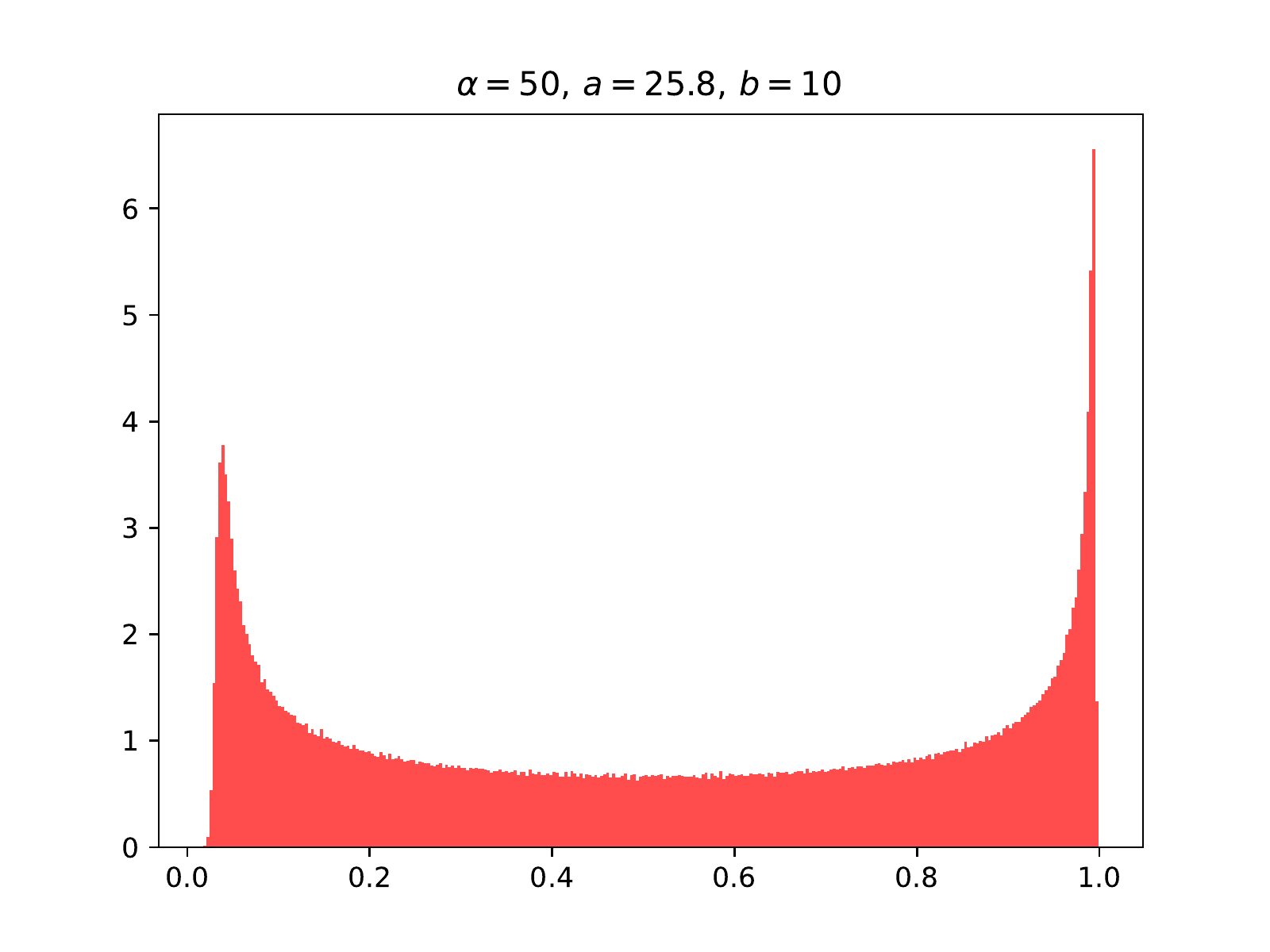}
		\includegraphics[scale=0.5]{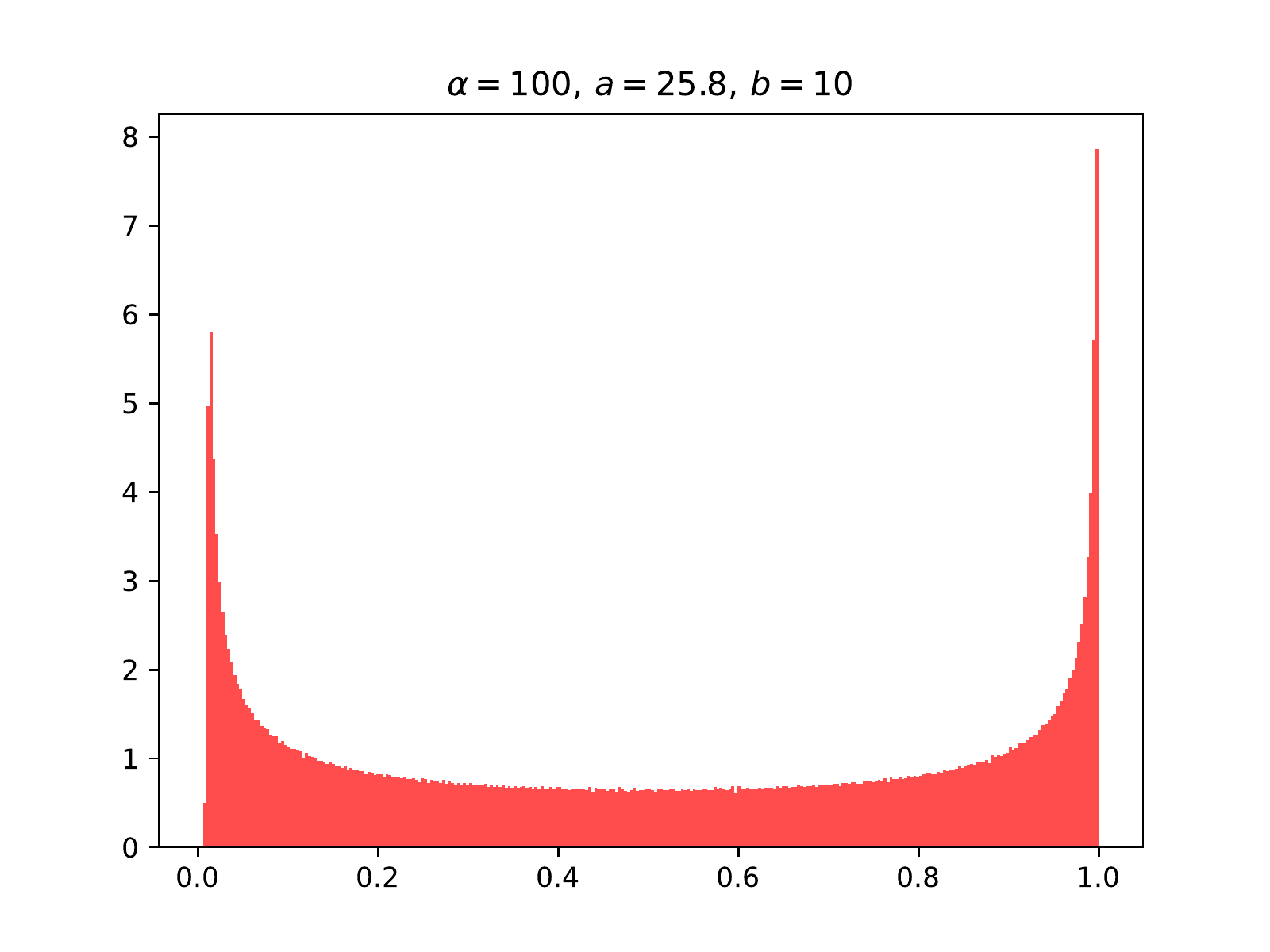}
		\caption{Jacobi $\alpha$ ensemble empirical spectral density  for different values of the parameters, $N= 500$, trials: $5000$.}
		\label{fig:jacobi}
	\end{figure}
	
	\newpage
	\appendix
	\section{ Associate Orthogonal polynomials} 
	\label{appendixA}

		% In \cite{Kerov} it was proved that this is also the orthogonality measure of the so called 
		The associate Hermite polynomials  $H^{(\alpha)}_{n}(x)$ were    introduced in \cite{Askey1984}. They are orthonormal polynomials   with respect to the measure $\mu_\alpha$ defined in \eqref{eq:spectralGaussian}, namely
		\[
		\int_{-\infty}^{+\infty}H^{(\alpha)}_{n}(x)H^{(\alpha)}_{n}(x)\mu_{\alpha}(x)dx=\delta_{nm},
		\]
		and satisfy the following   three terms recurrence relation:
		\begin{equation}
		xH^{(\alpha)}_{n}(x)=H^{(\alpha)}_{n+1}(x) +(n+\alpha)H^{(\alpha)}_{n-1}(x), \quad H_{-1}^{(\alpha)}(x) =0, \, \;H_0(x) =1 \, ,
		\end{equation} 
		for $\alpha=0$ one gets the standard Hermite polynomials.
		
		The  associate Laguerre polynomials of type 2, $L^{\alpha,\gamma}_n(x)$,  were introduced  \cite{Ismail1988}. They satisfy the orthogonality relation
		\[
		\int_{0}^{\infty}L^{\alpha,\gamma}_n(x)L^{\alpha,\gamma}_m(x)\mu_{\alpha,\gamma}(x)dx=\delta_{nm}\, ,
		\]
		where $$\mu_{\alpha,\gamma}(x)= \frac{1}{\Gamma(\alpha+1)\Gamma\left(1+ \frac{\alpha}{\gamma} + \alpha\right)} \frac{x^\frac{\alpha}{\gamma}e^{-x}}{\Big\lvert \psi\left(\alpha,-\frac{\alpha}{\gamma};xe^{-i\pi}\right)\Big\rvert^2}\, ,
		$$ 
		is  defined in \eqref{eq:spectralLaguerre}.  They also satisfy the   following three terms  recurrence relation:
		
		\begin{equation}
			L_0(x) = 1\, , \qquad L_1(x) = \frac{\alpha + \frac{\alpha}{\gamma} +1 - x}{\alpha +1}\, , 
		\end{equation}

		\begin{equation}
		- xL^{\alpha,\gamma}_n(x) =  (n+1+\alpha)L^{\alpha,\gamma}_{n+1} - \left(2n+\frac{\alpha}{\gamma} + \alpha +1\right)L^{\alpha,\gamma}_n(x)  + \left(n+\alpha + \frac{\alpha}{\gamma}\right)L^{\alpha,\gamma}_{n-1}(x) \, .
		\end{equation}
	
		In the definition of $\mu_{\alpha,\gamma}$ the Tricomi confluent hypergeometric function  $\psi(a,b;z)$ \cite{Abramowitz1972} is defined to be the standard solution of the Kummer's equation
		\begin{equation}
		z\frac{\di^2 \psi}{\di z^2} + (b-z)\frac{\di \psi}{\di z} - a \psi = 0\, ,
		\end{equation}
		uniquely determined by the normalization $\psi(a,b;z) \sim z^{-a}$ as $z\to \infty$ and $|\arg(z)| \leq \frac{3}{2}\pi$, here $\arg(z)$ is the argument of the complex number $z$.
		Moreover if $b\not\in \N$ then there exists an alternative formula for the Tricomi confluent hypergeometric:
		
		\begin{equation}
		\psi(a,b;z) = \frac{\Gamma(1-b)}{\Gamma(a-b+1)} {_1}F_1(a,b;z) + \frac{\Gamma(b-1)}{\Gamma(a)} {_1}F_1(a-b+1,2-b;z)\, ,
		\end{equation}
		where 
		
		\begin{equation}
		_1F_1(a,b;z) = \sum_{n=0}^\infty\frac{(a)_n}{(b)_nn!}z^n\,, \quad (a)_n = a(a+1)\cdots(a+n-1)\, .
		\end{equation}

		It was shown in \cite{Trinh2020}  that $\mu_{\alpha,a,b}$ \eqref{eq:explicit_jacobi_TT} is the orthogonality measure of the associate Jacobi polynomials of type 3,   $J^{{\alpha,a,b}}_n(x)$:
		\[
		\int_0^1J^{{\alpha,a,b}}_n(x)J^{{\alpha,a,b}}_m(x)\mu_{\alpha,a,b}(x)dx=\delta_{nm}.
		\]
		The polynomials $J^{{\alpha,a,b}}_n(x)$ satisfy the following recurrence relation:	
		\begin{equation}
		xJ^{{\alpha,a,b}}_n(x) = \sqrt{\xi_{n}\mu_{n+1}}J^{{\alpha,a,b}}_{n+1}(x)+(\xi_n+\eta_n)J^{{\alpha,a,b}}_n(x) + \sqrt{\xi_{n-1}\mu_n}J^{{\alpha,a,b}}_{n-1}(x)\, ,
		\end{equation}
		where
		\begin{equation}
		\begin{cases}
		\xi_0(\alpha) = \frac{\alpha + a +1}{2\alpha + a + b +2} \\
		\xi_n(\alpha)  = \frac{n + \alpha + a + 1}{2n+ 2 \alpha + a + b + 2} \frac{n + \alpha + a+ b +1  }{2n + 2\alpha + a + b + 1}, \, \quad n > 0 \\
		\eta_n(\alpha) = \frac{n + \alpha }{2n+ 2 \alpha + a + b + 1} \frac{n + \alpha + b   }{2n + 2\alpha + a + b }, \, \quad n > 0
		\end{cases}, \qquad \alpha \geq 0,\, a,b > -1\,,
		\end{equation}

	\newpage
	\bibliographystyle{siam}
	\bibliography{RMfix.bib}
\end{document}